\g@addto@macro{\UrlBreaks}{\UrlOrds}
\providecommand{\noopsort}[1]{} 
\newtheorem{Th}{Theorem}[section]
\newtheorem{Prop}[Th]{Proposition}
\newtheorem{Lemma}[Th]{Lemma}
\newtheorem{Cor}[Th]{Corollary}
\theoremstyle{definition}
\newtheorem{Remark}[Th]{Remark}
\newtheorem{Example}{Example}[section]
\newcommand{\beq}{\begin{equation}}
\newcommand{\eeq}{\end{equation}}
\def\scalar(#1,#2){(#1\mid#2)}
\newcommand{\raz}{\mathbbm{1}}
\newcommand{\no}{\#_1}
\newcommand{\cf}{{\cal F}}
\newcommand{\cm}{{\cal M}}
\newcommand{\ot}{\otimes}
\newcommand{\ov}{\overline}
\newcommand{\la}{\lambda}
\newcommand{\R}{{\mathbb{R}}}
\newcommand{\Z}{{\mathbb{Z}}}
\newcommand{\N}{{\mathbb{N}}}
\newcommand{\cL}{{\mathcal{L}}}
\newcommand{\cM}{{\cal M}}
\newcommand{\cF}{{\cal F}}
\newcommand{\mob}{\boldsymbol{\mu}}
\newcommand{\dnu}{d_\nu}
\newcommand{\hnu}{h_\nu}
\newcommand{\Bern}{B_{\nicefrac12, \nicefrac12}}
\newcommand{\bdelta}{\boldsymbol{\delta}}
\title{Hereditary subshifts whose measure of maximal entropy has no Gibbs property}
\author{Joanna Ku\l{}aga-Przymus \and Micha\l{} D. Lema\'nczyk}
\date{27 July 2020}
\begin{document}
\maketitle
\thispagestyle{empty}
\begin{abstract} We show that  the measure of maximal entropy for the hereditary closure of a $\mathscr{B}$-free subshift has the Gibbs property if and only if the Mirsky measure of the subshift is purely atomic. This answers an open question asked by Peckner. Moreover, we show that $\mathscr{B}$ is taut whenever the corresponding Mirsky measure $\nu_\eta$ has full support. This is the converse theorem to a recent result of Keller.
\end{abstract}

\section{Introduction}\label{se intro}
In this paper we consider subshifts $(X,S)$, where $X\subset \{0,1\}^\Z$ and $S$ denotes the left shift. Let $\cM(X,S)$ be the set of probability Borel $S$-invariant measures on~$X$. Recall that $\cM(X,S)$ is compact (and metrizable) in the weak-$\ast$ topology. Let $\cM^e(X,S)\subset \cM(X,S)$ stand for the subset of ergodic measures.		

For $x\in X$, let $\cL(x)$ denote the family of all blocks appearing in $x$. The set $\cL(X):=\bigcup_{x\in X}\cL(x)$ is called the {\em language} of $X$. For any $n\geq 1$, by $\cL_n(X) \subset \cL(X)$ we will denote the subset of blocks of length $n$. When $(X, S)$ is clear from the context we will sometimes abbreviate $\cL = \cL(X)$, $\cL_n = \cL_n(X)$, $\cM = \cM(X, S)$ and $\cM^e = \cM^e(X, S)$. Moreover, we will use words "block" and "word" interchangeably.

			Fix a word $W = \left[w_0 w_1 \ldots w_{n - 1}\right] \in \cL_n$ and $x=(x_i)_{i\in\Z} \in X$. Then:
			\begin{itemize}
				\item  $|W| = n$ will stand for the length of $W$;
				\item $ W[i, j] = \left[w_i\ldots w_j\right]$ for $0\le i \le j \le n - 1$ will denote a subword of $W$;
				\item $x[i,j] = \left[x_i,\dots,x_j\right]$ for $i\leq j$ will denote a subword of $x$;
				\item $\#_1 W = \# \left\{0 \le i \le n - 1  : w_i = 1\right\}$ will be the number of ones in $W$.
			\end{itemize}
Given a block $C\in\cL(X)$, we will often denote by the same letter $C$ the corresponding cylinder set, i.e. $\{x\in X: x[0,|C|-1]=C\}$, for example, $[1]=\{x\in X : x[0]=1\}$.
\paragraph{Hereditary subshifts}
We recall that subshift  $(X, S)$ with language $\cL$ is \textit{hereditary} if
				\begin{equation*}
					W \in \cL,\; W' \le W \Rightarrow W' \in \cL,
				\end{equation*}
				where $\leq$ is to be understood coordinatewise 
			(see e.g. \cite{Ke-Li,Kw} for basic properties and examples of such systems). Moreover, for any subshift $(X, S)$, one can define the \textit{hereditary closure} of $X$, $(\widetilde{X},S)$,  via
				\begin{equation*}
					\widetilde{X}:=\{z\in \{0,1\}^{\Z}:  z\leq x\text{ for some }x\in X\}.
				\end{equation*}	
			It follows immediately that $X$ is hereditary iff  $\widetilde{X} = X$. Examples of hereditary systems include many $\mathscr{B}$-free systems (see below), spacing shifts \cite{MR0321058}, beta shifts (\cite{Renyi}, for the proof of heredity, see~\cite{Kw}), bounded density shifts~\cite{MR3122156} or some shifts of finite type (see Section~\ref{sft}). Most of them are intrinsically ergodic (i.e.\ they have a unique measure of maximal entropy), see below for $\mathscr{B}$-free shifts,~\cite{Cl-Th} for beta shifts and \cite{MR3957218} for a subclass of bounded density shifts (for other listed examples, to our best knowledge, intrinsic ergodicity remains open).

\paragraph{Convolution measures}
			In our examples the measure of maximal entropy will have a special form. Let $Q\colon X \times \{0,1\}^\Z	\rightarrow \widetilde{X}$ be the coordinatewise multiplication:	
				\begin{equation*}\label{definition of coordinatewise multiplication function}
					Q(x, y) = \left(\ldots, x_{-1}y_{-1}, x_0y_0, x_1y_1, \ldots\right)
				\end{equation*}
				for $x=(x_i)_{i\in\Z}\in X$, $y=(y_i)_{i\in\Z}\in \{0,1\}^\Z$.
For any measures $\nu\in\cM(X,S)$ and $\mu \in \cM(\{0, 1\}^\Z,S)$ the \textit{multiplicative convolution of} $\nu$ and $\mu$ is the measure $\nu \ast \mu \in \cM(\widetilde{X})$ given by
				\begin{equation*}\label{definition of convolution of measures}
					\nu \ast \mu = Q_\ast (\nu \otimes \mu),
				\end{equation*}
				where $Q_\ast$ stands for the image of $\nu \otimes \mu$ via $Q$.\footnote{Similar notation will be used for other maps and measures, too.}
			In particular, we will be interested in measures of the form
				\beq\label{definition of kappa being convolution with bernoulli}
					\kappa=\nu\ast \Bern,
				\eeq
			where $\nu\in \cM^{e}(X,S)$ and $\Bern$ stands for the Bernoulli measure on $\{0,1\}^\Z$ with $\Bern([0])=\Bern([1])=\nicefrac{1}{2}$.
			
			It is not hard to see that $\kappa=\nu\ast \Bern$ is of full support as soon as $\nu$ is so. Moreover, $\kappa=\nu\ast \Bern$ is ergodic whenever $\nu$ is ergodic.

\paragraph{Entropy}
		Recall that given a subshift $(X,S)$, its \textit{topological entropy} $h=h(X,S)$ is defined as follows:		 
		 	\begin{equation}\label{definition of topological entropy}
		 		h =\lim_{n\to\infty} \frac{1}{n}\log\left(\left|\cL_n \right|\right) = \inf_{n\in\N} \frac{1}{n}\log\left(\left|\cL_n \right|\right).\footnote{Throughout the paper function $\log$ always stands for~$\log_2$.}
		 	\end{equation}
		Similarly, for any $\nu\in\cM$ the \textit{measure entropy} $h_\nu$ can be computed as
			\begin{equation}\label{definition of measure entropy}
				h_\nu =  \lim_{n\to\infty} \frac{1}{n}h_\nu (\cL_n) = \inf_{n\in\N} \frac{1}{n}h_\nu(\cL_n),
			\end{equation}
		where $\hnu(\cL_n) = -\sum_{W \in \cL_n} \nu(W)\log\left(\nu(W)\right)$ denotes the Shannon entropy with respect to the partition of $X$ into blocks given by the elements of $\cL_n$. It is well-known that $\hnu$ and $h$ are related via the following variational principle:
			\begin{equation}\label{variational principle for entropy}
				h = \sup_{\nu\in\cM} \hnu.
			\end{equation}
		Finally, for any probability vector $\boldsymbol{p} = (p_1, \ldots, p_n)$ let us denote by $H(\boldsymbol{p})$ the Shannon entropy
			\begin{equation}\label{definition of Shannon entropy general case}
				H(\boldsymbol{p}) = -\sum_{i = 1}^{n} p_i \log(p_i).
			\end{equation}
		In particular, if $\boldsymbol{p} = (p, 1 - p)$ we will write
			\begin{equation}\label{definition of Shannon entropy special case n eq 2}
				H(p) = H(\boldsymbol{p}) = -p\log(p) - (1 - p)\log(1 - p).
			\end{equation}
		Notice that $H(p)$ is symetric with respect to $1/2$, strictly increasing (decreasing) on $[0, 1/2]$ ($[1/2, 1]$).

\paragraph{Gibbs property}
A measure $\kappa\in \cM^e(X,S)$ is {said to have a}  \textit{Gibbs property} if there exists a constant $a>0$ such that
				\beq\label{gibbs}
					\kappa(C)\geq a\cdot 2^{-|C|h(X,S)}
				\eeq			
			for all blocks $C\in \cL(X)$ having \textbf{positive} $\kappa$-measure. A motivation to study this property is that in many natural situations, like sofic systems~\cite{We0} or systems enjoying particular specification properties and beyond (see \cite{Cl-Th,Cl-Th1} and the references therein), there is a unique measure of maximal entropy and it enjoys the Gibbs property or a weakening of it. More than that, by a result of B.~Weiss \cite{We0}, if $\kappa$ satisfies the Gibbs property and is a measure of maximal entropy, then $(X,S)$ is intrinsically ergodic. We will be interested in examples, where~\eqref{gibbs} fails, but the system under consideration remains intrinsically ergodic.
			This yields classes of positive entropy intrinsically ergodic systems different from many known so far.

The notion of Gibbs measures comes from statistical physics~\cite{MR1747792,Lanford} and it corresponds to the idea of equilibrium states of complicated physical systems. They turned out to be an interesting object also from the point of view of dynamics and have played an important role in ergodic theory (see, e.g.,~\cite{MR2423393,MR0399421}). Given a finite alphabet $\mathsf{A}$ and a (H\"older) continuous function $\varphi\colon \mathsf{A}^\Z \to \R$ (often referred to as a \textit{potential}) and a subshift $X\subset \mathsf{A}^\Z$, a measure $\mu_\varphi\in \cm(X,S)$ is called a \textit{Gibbs measure} for $\varphi$, whenever there exist constants $P=P(\varphi,X)\geq 0$ and $c=c(\varphi,X)>0$ such that
		$$
		c^{-1}\leq \frac{\mu_\varphi(x[0,n-1])}{2^{\sum_{k=0}^{n-1}\varphi(S^kx)-nP}} \leq c
		$$
(constant $P$ above is so called topological pressure of $\varphi$; for $\varphi\equiv 0$, we have $P=h(X,S)$). Clearly, if $\mu$ is a Gibbs measure corresponding to $\varphi\equiv 0$ then the lower bound~\eqref{gibbs} holds. The Gibbs property \eqref{gibbs} of equilibrium states corresponding to more general potentials for $\mathscr{B}$-free subshifts will be considered in a forthcoming paper.

\paragraph{$\mathscr{B}$-free systems}			
Our direct interest in looking at the Gibbs property is motivated by the class of
so called $\mathscr{B}$-{\em free systems} $(X_{\eta},S)$. For $\mathscr{B}\subset \N\setminus \{1\}$, let $\mathcal{F}_\mathscr{B}$ be the set of \textit{$\mathscr{B}$-free integers}, i.e.\ numbers with no divisors in $\mathscr{B}$. Its complement (the set of multiples of $\mathscr{B}$) will be denoted by $\mathcal{M}_\mathscr{B}$. We will tacitly assume that $\mathscr{B}$ is primitive, i.e.\ whenever $b\divides b'$ for some $b,b'\in\mathscr{B}$ then $b=b'$. Such sets were studied already in the 30’s by number theorists~\cite{MR1512943,zbMATH03014412,Davenport:1933aa,Da-Er,Davenport1936,MR1574879}, whereas the dynamical approach is more recent. Sarnak in 2010~\cite{sarnak-lectures} proposed to study the dynamical systems related to the M\"obius function $\mob$ and its square $\mob^2$ (the latter is nothing but the characteristic function of $\mathcal{F}_\mathscr{B}$, with $\mathscr{B}$ equal to the set of all squares of primes). For a general $\mathscr{B}$, we set $\eta:=\raz_{\cf_\mathscr{B}}$ and consider the subshift $(X_\eta,S)$, where
			 $$X_{\eta}:=\ov{\{S^n\eta:n\in\Z\}}.$$
In the most classical case (so-called Erd\"os), one assumes that $\mathscr{B}$ is infinite, the elements of $\mathscr{B}$ are mutually coprime and $\sum_{b\in\mathscr{B}}1/b<+\infty$ (in particular, if $\mathscr{B}=\{p^2 : p\in\mathscr{P}\}$, where $\mathscr{P}$ stands for the set of all primes, we speak of the \textit{square-free} system).
Ergodic and topological properties of the corresponding $\mathscr{B}$-free systems $(X_\eta,S)$ were studied in~\cite{Ab-Le-Ru,Ko-Ku-Kw,Ku-Le-We,MR3612882,MR3430278}. General $\mathscr{B}$-free systems, defined as above, were thoroughly studied in~\cite{MR3803141}. A continuation of this research and various further natural generalizations can be found in~\cite{Baake:2015aa,MR3947636,Ke,Keller:2017aa,MR3296562}.

Central role in studying properties of $\mathscr{B}$-free systems is played by so called \textit{Mirsky measure}, denoted by $\nu_\eta$. In the classical, i.e.\ Erd\"os, case, it can be defined by the frequencies of blocks appearing in $\eta$, first studied by Mirsky~\cite{MR0028334,MR0021566}. In other words, $\eta$ is a generic point for $\nu_\eta$:
			 \begin{equation}\label{generic point}
			 \lim_{N\to\infty}\frac{1}{N}\sum_{n\leq N}f(S^n\eta)= \int f\, d\nu_\eta \text{ for all continuous functions $f$ on $X_\eta$}.
			 \end{equation}
For an arbitrary $\mathscr{B}\subset \N\setminus \{1\}$, $\eta$ may fail to be a generic point~\cite{MR1512943}. However, Davenport and Erd\"os~\cite{Da-Er} proved that the logarithmic density  of $\mathcal{F}_\mathscr{B}$, i.e.\ $\bdelta(\mathcal{F}_\mathscr{B})=\lim_{n\to\infty}\frac{1}{\log N}\sum_{a\in \mathcal{F}_\mathscr{B},1\leq a\leq N}\frac{1}{a}$, always exists. Moreover, the logarithmic density of $\mathcal{F}_\mathscr{B}$ is equal to its lower density. A consequence of this fact, proved in~\cite{MR3803141}, is that $\eta$ is quasi-generic for a shift-invariant measure (denoted by $\nu_\eta$) along any subsequence $(N_k)_{k\geq 1}$ realizing the lower density of $\mathcal{F}_\mathscr{B}$, i.e.\ $(N_k)_{k\geq 1}$ is such that $\underline{d}(\mathcal{F}_\mathscr{B})=\lim_{k\to\infty}\frac{1}{N_k}|\mathcal{F}_\mathscr{B}\cap [1,N_k]|$ and the convergence in~\eqref{generic point} takes place along this subsequence. This result can serve for our purposes as a definition of the Mirsky measure (both, in~\cite{Ab-Le-Ru} and~\cite{MR3803141} the definiton of the Mirsky measure is more algebraic: $\nu_\eta$ is the image of Haar measure of the smallest closed subgroup of $\prod_{b\in\mathscr{B}}\Z/b\Z$ containing $(1,1,\dots)$ via a certain map).

In the Erd\"os case, the Mirsky measure $\nu_\eta$ has full support and $X_\eta$ is hereditary~\cite{Ab-Le-Ru}. Moreover, each such $(X_{\eta},S)$ is transitive, has positive entropy and displays properties similar to the full shift, see also \cite{Ku-Le-We1,Ko-Ku-Kw}. Besides, $(X_{\eta},S)$ is intrinsically ergodic and the convolution $\nu_{\eta}\ast \Bern$ is the measure of maximal entropy of $(X_{\eta},S)$ \cite{Ku-Le-We}.

In the square-free case, Peckner in~\cite{MR3430278} showed that the measure $\nu_{\eta}\ast \Bern\in \cM^e(X_{\eta},S)$ has no Gibbs property. His proof however used essentially some non-trivial number-theoretic facts concerning the
set of squares of primes. Peckner asked whether the absence of Gibbs property for the above convolution measure is characteristic for all $\mathscr{B}$-free systems.\footnote{More precisely, Peckner was interested in the Erd\"os case.}

\paragraph{Zoo of densities}			
Our main tool will be the following four notions of \textit{density}: $d$ and $D$ of topological nature and their measure-theoretic counterparts $d_\nu$ and $D_\nu$. Fix $(X, S)$ and let
			\begin{equation*}\label{definition of lower density}
				d := \sup_{\mu \in \cM}\mu\left(\left[1\right]\right),
			\end{equation*}			
			\begin{equation}\label{definition of upper density}
				D := \lim_{n\rightarrow \infty} \frac{\max_{W\in \mathcal{L}_n} \#_1 W }{n}=\sup_{x\in X}\overline{d}_B(\{n\in \N : x[n]=1\}),
			\end{equation}
			where $\overline{d}_B$ stands for the upper Banach density. Moreover, for $\nu\in\cM(X,S)$, let
			\begin{equation*}\label{definition of lower measure density}
				d_\nu := \nu\left(\left[1\right]\right),
			\end{equation*}
			\begin{equation*}\label{definition of upper measure density}
				D_\nu := \lim_{n\rightarrow \infty} \frac{\max_{W\in \mathcal{L}_n,\; \nu\left(W\right) >0} \#_1 W }{n}.
			\end{equation*}
Notice that both $D$ and $D_\nu$ are well defined since the sequences $\left(\max_{W\in \mathcal{L}_n} \#_1 W\right)_n$ and $\left(\max_{W\in \mathcal{L}_n,\; \nu\left(W\right) >0} \#_1 W\right)_n$ are subadditive. In particular, we can replace $\lim$'s by $\inf$'s.

To understand better what the above quantities mean and what are the relations between them, we need the following folklore result:\footnote{We would like to thank one of the referees for suggesting this way of formulating our results.}
\begin{Th}\label{prop11}
Let $(X,T)$ be a topological dynamical system. Let $x\in X$ and let $A\subset X$ be a clopen set. Then
\[
\overline{d}_B(\{n\in\N : T^nx\in A\})=\sup\{ \nu(A) :  \nu\in\mathcal{M} \text{ such that }\nu(\overline{\{T^nx : n\in\Z\}})=1\}.
\]
\end{Th}
We include below the proof for the reader convenience, as we could not find a direct reference. Let us recall first however a result of a very similar flavour. 
\begin{Th}[Theorem 2.6 in~\cite{Bergelson_2008}]\label{BerDow}
Let $(X,T)$ be a topological dynamical system and let $x\in X$. The following conditions are equivalent:
\begin{itemize}
\item
the point $x\in X$ is essentially recurrent, i.e.\ for any neighbohood $U_x$ of $x$ the set of visits $\{n\in\Z : T^nx\in U_x\}$ has positive upper Banach density;
\item
the orbit closure of $x$ under $T$ is measure saturated, i.e.\ for every nonempty open subset $U$ of the orbit closure of $x$, there exists  an invariant measure $\mu$ with $\mu(U)>0$.
\end{itemize}
\end{Th}
In fact, the proofs of Theorem~\ref{prop11} and of Theorem~\ref{BerDow} go along the same lines (while it does not seem to us that one of the results implies the other).
\begin{proof}[Proof of Theorem~\ref{prop11}]
We will show first that there exists $\nu$ such that
\[
\nu(\overline{\{T^nx : n\in\Z\}})=1 \text{ and }\overline{d}_B(\{n\in\N : T^nx\in A\})=\nu(A).
\]
Fix $x\in X$ and let $(m_k)\subset \N$ be an increasing sequence such that
\begin{equation}\label{zbieznosc}
\overline{d}_B(\{n\in\N : T^nx\in A\})=\lim_{k\to\infty} \frac1k \# ([m_k,m_k+k-1]\cap \{n\in\N : T^nx\in A\}).
\end{equation}
Let $x^{(k)}:=T^{m_k}x$ and
\[
\nu_k:=\frac1k\sum_{i=0}^{k-1}\delta_{T^ix^{(k)}}.
\]
We may assume without loss of generality that $\nu_k\to\nu$ weakly. Notice that $\nu$ is $T$-invariant and is concentrated on the orbit closure of $x$ under $T$. Rewriting~\eqref{zbieznosc} and using the fact that $\raz_A$ is continuous, we obtain
\[
\overline{d}_B(\{n\in\N : T^nx\in A\})=\lim_{k\to \infty}\frac1k\sum_{i=0}^k\raz_A(T^ix^{(k)})=\lim_{k\to\infty}\int \raz_A\, d\nu_k=\nu(A).
\]

We will now show that $\nu(A)$ for $\nu\in\mathcal{M} \text{ such that }\nu(\overline{\{T^nx : n\in\Z\}})=1\}$ cannot exceed $\overline{d}_B(\{n\in\N : T^nx\in A\})$. Using the ergodic decomposition, it is clear that it suffices to prove it for $\nu$ ergodic. For any ergodic $\nu$ that is concentrated on the orbit closure of $x$ under $T$, we can find a generic point $y$ in the orbit closure of $x$. In particular, one can find $m_k$ such that 
\[
\{0\leq i\leq k-1 : T^{m_k+i}x \in A\}= \{0\leq i\leq k-1 : T^{i}y \in A\}
\]
(recall that $A$ is clopen).
It follows that
\begin{align*}
\overline{d}_B(\{n\in\N : T^nx\in A\})\geq \lim_{k\to\infty}\frac1k \# ([m_k,m_k+k-1] \cap \{n\in\N : T^nx \in A\})\\
=\lim_{k\to\infty}\frac1k\#([0,k-1]\cap \{n\in\N : T^ny\in A\})=\nu(A)
\end{align*}
which completes the proof.
\end{proof}
Taking into account the additional supremum over all $x\in X$ and applying the above proposition to $A=[1]$, we immediately obtain that
\begin{equation*}\label{dD}
D=\sup_{\mu\in\mathcal{M}}\mu([1])=d.
\end{equation*}
Moreover, using~\eqref{definition of upper density} and Proposition~\ref{prop11} yields
\begin{align*}
D_\nu&=\lim_{n\to\infty}\frac{\max_{W\in \mathcal{L}_n(\textrm{supp}(\nu))}\#_1W}{n}\\
&=\sup_{x\in \textrm{supp}(\nu)}\overline{d}_B(\{n\in \N : x[n]=1\})=\sup\{\mu([1]) : \mu(\textrm{supp}(\nu))=1\}.
\end{align*}
It follows immediately that 
\begin{equation}\label{main inequalities}
d_\nu\leq D_\nu\leq D=d.
\end{equation}
Moreover, we have the following \textit{variational principle}:
			\begin{equation}\label{variational principle for densities}
				D = \sup_{\nu \in \cM} D_\nu.
			\end{equation}

We will call a measure $\nu\in\cm(X,S)$ a \textit{maximal density measure} if $d_\nu = d$. Furthermore, if $D_\nu = D$, we will say that $\nu$ is \textit{ones-saturated}.

\begin{Remark}\label{existence}
Notice that a measure of maximal density always exists. {Indeed, since $f=\raz_{[1]}$ is continuous, it follows that the map $\nu\mapsto \nu([1])$ is continuous.}
\end{Remark}
\begin{Remark}
Clearly, each measure of full support is ones-saturated. Moreover, it follows from~\eqref{main inequalities} that also each measure of maximal density is ones-saturated, {whence, by Remark~\ref{existence},} a ones-saturated measure always exists.
\end{Remark}

\begin{Remark}
It is a classical fact in the theory of cut-and-project sets that for any $\mathscr{B}$, the Mirsky measure $\nu_\eta$ is a measure of maximal density for $(X_\eta,S)$ (see e.g. Theorem 4 and Corollary 4 in~\cite{Ke-Ri}, cf.\ also Chapter 7 in~\cite{Ba}. To obtain a maximal density measure without full support, consider a $\mathscr{B}$-free system that is not taut and take its Mirsky measure (see Corollary~\ref{max dens not full support}).
\end{Remark}

\begin{Example}
Let us see that the inequalities in~\eqref{main inequalities} can be sharp:
\begin{enumerate}[(A)]
\item
To obtain $d_\nu < D_\nu$, consider the full shift and $\nu = B_{p, q}$ being the Bernoulli measure on $\{0,1\}^\Z$ with $p=\nu([1])\in (0,1)$ ($q=1-p=\nu([0])$). Then $D_\nu = 1$ whilst $d_\nu = p$.
\item\label{B}
We have already noticed that $D_\nu<D$ implies that $\nu$ is not fully supported.  Consider the full shift with $\nu =\nicefrac{1}{2} (\delta_{(\ldots 01.0101\ldots)} + \delta_{(\ldots10.1010\ldots)})$. In this case $D_\nu = \nicefrac12$ and $D = 1$.
\item
Consider the full shift with measure $\kappa=\nu\ast B_{p,q}$, where $\nu$ is as in \eqref{B}. Then
	$
	d_\kappa=\nicefrac{p}{2},\ D_\kappa=\nicefrac12,\ D=1.
	$
\end{enumerate}

\end{Example}

\paragraph{Entropy vs density}

\begin{Example}
We will now give examples showing possible relations between $d$ and $h$ (without putting any extra assumptions on $(X,S)$).
			\begin{enumerate}[(A)]
				\item For the full shift we have $d=h=1$.
				\item For each zero entropy subshift with an invariant measure different from $\delta_{\ldots0.00\ldots}$, we have $0=h<d$.
				\item\label{C} Fix $0< p< 1/2$ (and think about $p$ to be very close to $1/2$). Using the Jewett-Krieger theorem we may find a uniquely ergodic subshift $(X,S,\nu)$  measure-theoretically isomorphic to the Bernoulli shift $(\{0,1\}^{\Z},B_{p,q},S)$.
				First, we will show that (recall \eqref{definition of Shannon entropy special case n eq 2})
					\begin{equation}\label{entropy calculated}
						h(X, S) = H(p).
					\end{equation}
				Indeed, by \eqref{variational principle for entropy}, we have $h(X,S) = \hnu$, {which} gives
					$$h(X, S) = \hnu = h_{B_{p, q}} = H(p).$$
				Now, we turn to estimating $d(X, S)$. We will show that
					\begin{equation}\label{density estimated}
						p \le d(X,S) \le 1 - p.
					\end{equation}
				The partition $\alpha = ([0],[1])$ is a generating one, so
					$$H(\nu\left([1]\right)) = H(\nu\left([0]\right)) \ge \hnu(S,\alpha) = h(X, S) = H(p).$$
				Due to the shape of function $H$, it follows that
					$$p \le \nu\left([1]\right) = \dnu = d(X, S)  \le 1 - p.$$
				If we take $0< p < 1/2$ such that $1 - p < H(p)$, \eqref{entropy calculated} with \eqref{density estimated} imply $d <h$.
			\end{enumerate}
\end{Example}

		If we take $p$ very close to $1/2$ in \eqref{C} above, we can make $d$ as close to $1/2$ and $h$ as close to $1$ as we like. Therefore,  there seems to be no continuity of $h$ with respect to $d$. Nevertheless, we have the following "continuity" lemma:
		
			\begin{Lemma}
				Suppose that $d \leq \nicefrac12$. Then
					\begin{equation}\label{inequality entropy less than density}
						h \le  H(d).
					\end{equation}
					In particular, if $d \rightarrow 0$ then $h \rightarrow 0$.
					\begin{proof}
					Let $\mu\in \mathcal{M}$. Then
					\[
						h_\mu(X)=h_\mu(\{0,1\}^\Z)=\lim_{n\to\infty}\frac1n h_\mu(\{0,1\}^n) \leq H(\mu([1]))
					\]
					as the above limit in non-increasing. Moreover, since $\mu([1])\leq d\leq \nicefrac12$ and since $H$ on $(0,\nicefrac12)$ is increasing, we obtain
					\[
					h_\mu(X)\leq H(d).
					\]
					Finally, the variational principle allows us to replace $h_\mu(X)$ on the left hand side by the topological entropy $h$.
					\end{proof}
			\end{Lemma}

			For an arbitrary subshift $(X, S)$ one may also wonder what are the relations between $d = d(X, S)$, $h = h(X, S)$ and $\widetilde{d} = d(\widetilde{X}, S)$, $\widetilde{h} = h(\widetilde{X}, S)$. Using the right hand side of \eqref{main inequalities}, one obtains immediately that
				\begin{equation*}
					d = \widetilde{d}.
				\end{equation*}
				Moreover, 			
				\begin{equation*}
					\widetilde{h} \ge h.
				\end{equation*}
			Due to the fact that in a hereditary subshift we can "downgrade" ones to zeros it is clear that
				\begin{equation}\label{nier}
					\widetilde{h} \ge d.
				\end{equation}
			We summarize the  above in the following chain of inequalities:
				\begin{equation*}
					d = \widetilde{d} \le \widetilde{h} \text{ and } h \le \widetilde{h}.
				\end{equation*}
			One can have both, $d=\widetilde{h}$ (see the remainder of this section) and $d<\widetilde{h}$ (see Section~\ref{sft}).

\paragraph{Results}
Our main tool to prove the absence of Gibbs property for hereditary subshifts is the following technical result:
				\begin{Th}\label{main theorem}
				Fix $(X,S)$ and suppose that $\nu\in\cM^e(X,S)$ is ones-saturated and non-atomic. If $d=\widetilde{h}$ then $\kappa=\nu\ast \Bern$ does not have Gibbs property.
				\end{Th}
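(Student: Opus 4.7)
The plan is to construct, for each $n$, an explicit block $W_n$ with $\kappa(W_n) > 0$ along which any putative Gibbs constant must collapse. The natural candidate is to take $W_n \in \cL_n(X)$ maximizing $\#_1 W$ among $W \in \cL_n(X)$ with $\nu(W) > 0$, and to set $M(n) := \#_1 W_n$. Since $\nu$ is ones-saturated, Theorem~\ref{main inequalities} gives $D_\nu = D = d = \widetilde{h}$, so $M(n)/n \to \widetilde{h}$ and $M(n) \ge n\widetilde{h}$ for every $n$.

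Unfolding $\kappa = Q_\ast(\nu \otimes \Bern)$ yields
\begin{equation*}
	\kappa(W_n) \;=\; \sum_{U \in \cL_n(X),\; U \ge W_n} \nu(U)\cdot 2^{-\#_1 U},
\end{equation*}
because, given $U \ge W_n$ in $\cL_n(X)$, the Bernoulli coordinates $v_i$ are forced where $U_i = 1$ and free where $U_i = 0$, contributing a factor $2^{-\#_1 U}$. Any $U \ge W_n$ with $\nu(U)>0$ satisfies $\#_1 U \ge \#_1 W_n = M(n)$ (since $U \ge W_n$) and $\#_1 U \le M(n)$ (by maximality of $W_n$), forcing $U = W_n$; thus
\begin{equation*}
	\kappa(W_n) \;=\; \nu(W_n)\cdot 2^{-M(n)} \;>\; 0.
\end{equation*}
The hypothetical Gibbs inequality $\kappa(W_n) \ge a\cdot 2^{-n\widetilde{h}}$ would then give $\nu(W_n) \ge a\cdot 2^{M(n) - n\widetilde{h}} \ge a$ for every $n$, so it suffices to prove $\nu(W_n) \to 0$.

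This is where non-atomicity enters, and it is the main obstacle. I would establish the slightly stronger uniform claim: for every shift-invariant non-atomic probability $\nu$ on $\{0,1\}^{\Z}$, $\max_{W \in \cL_n}\nu([W]) \to 0$. Argue by contradiction: assume some $\varepsilon > 0$ admits blocks of arbitrarily large length with $\nu$-measure $\ge \varepsilon$. The family $T := \{W : \nu([W]) \ge \varepsilon\}$ is prefix-closed (taking prefixes only enlarges cylinders) and contains at most $\lfloor 1/\varepsilon\rfloor$ words of each length, so K\"onig's lemma yields $y \in \{0,1\}^{\N}$ whose every prefix lies in $T$. Continuity of measure gives $\nu(C_y) \ge \varepsilon$ for $C_y := \{x \in \{0,1\}^{\Z} : x_n = y_n \text{ for all } n \ge 0\}$, and shift-invariance gives $\nu(S^{-k}C_y) = \varepsilon$ for all $k \in \Z$. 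A dichotomy on $y$ closes the argument: if $y$ admits no period on $[0,\infty)$ then the translates $S^{-k}C_y$ are pairwise disjoint (an overlap between $S^{-k}C_y$ and $S^{-k'}C_y$ would force period $k'-k$ of $y$ from index $0$), producing infinite total $\nu$-mass; otherwise $y$ is purely $p$-periodic from $0$ for some $p$, in which case $S^{-(k-p)}C_y \subseteq S^{-k}C_y$ with equal $\nu$-measure, and iterating downward through multiples of $p$ yields a decreasing chain of measure-$\varepsilon$ sets whose intersection is the single bi-infinite $p$-periodic extension of $y$---an atom of $\nu$-mass $\ge \varepsilon$, contradicting non-atomicity.

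Combining the three pieces, Gibbs would force $\nu(W_n) \ge a > 0$ for all $n$ while the uniform claim gives $\nu(W_n) \to 0$; the contradiction proves that $\kappa$ has no Gibbs property. The technical heart of the proof is the prefix-tree/K\"onig/shift-dichotomy argument of the third paragraph; the rest is an unpacking of the convolution definition together with the ones-saturation identity from Theorem~\ref{main inequalities}.
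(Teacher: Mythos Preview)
Your proof is correct and follows essentially the same route as the paper: choose $\nu$-ones-maximal blocks $W_n$, use the convolution formula to get $\kappa(W_n)=\nu(W_n)\,2^{-M(n)}$, invoke ones-saturation together with Theorem~\ref{main inequalities} and subadditivity to obtain $M(n)\ge n\widetilde{h}$, and conclude that Gibbs would force $\nu(W_n)\ge a$ for all $n$, contradicting non-atomicity. The only difference is in the final step: the paper packages ``large cylinders of unbounded length $\Rightarrow$ atom'' into a short pigeonhole/diagonal lemma (Lemma~\ref{lemat atom}), whereas you prove it via K\"onig's lemma followed by a periodic/aperiodic dichotomy that explicitly exploits shift-invariance. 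Your version is a bit longer but arguably more complete, since the paper's diagonal argument literally yields only a one-sided tail $x[0,\infty)$ rather than a single point of $\{0,1\}^{\Z}$; your dichotomy (or, equivalently, recentering the cylinders before diagonalizing) is exactly what is needed to upgrade this to a genuine atom.
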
					

Recall that in general, $\mathscr{B}$-free systems $(X_\eta,S)$ need not be hereditary.  However, by \cite{MR3803141}, $(\widetilde{X}_{\eta},S)$ is intrinsically ergodic and the convolution $\kappa=\nu_{\eta}\ast \Bern$ is the unique measure of maximal entropy in $(\widetilde{X}_{\eta},S)$. Moreover, the Mirsky measure is a measure of maximal density for $(X_\eta,S)$, in particular, it is ones-saturated.\footnote{For the details, see Section~\ref{uzupelnienia B-free}.}  Finally, we have $d=\widetilde{h}$ (see Proposition K in~\cite{MR3803141}).

As an immediate consequence of these remarks and Theorem~\ref{main theorem}, we obtain the positive answer to Peckner's question:
\begin{Cor}\label{c:bwolne}
Let $\mathscr{B}\subset \N\setminus \{1\}$. Suppose that the Mirsky measure $\nu_\eta$ is not atomic. Then the measure of maximal entropy of $(\widetilde{X}_\eta,S)$ {does not have} the Gibbs property.
\end{Cor}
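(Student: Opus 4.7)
The plan is to apply Theorem~\ref{main theorem} directly with $(X,S)=(X_\eta,S)$ and $\nu=\nu_\eta$, producing the convolution $\kappa=\nu_\eta\ast\Bern$ on $\widetilde{X}_\eta$. Once Theorem~\ref{main theorem} yields the failure of the Gibbs property for this $\kappa$, the corollary will follow because $\kappa$ is, by \cite{MR3803141}, the unique measure of maximal entropy of $(\widetilde{X}_\eta,S)$. So the entire task reduces to checking the four hypotheses of Theorem~\ref{main theorem} for $\nu_\eta$, and the paragraph immediately preceding the corollary already points to where each of them lives.

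Concretely, I would verify the following in order. First, $\nu_\eta\in\cM^e(X_\eta,S)$: the Mirsky measure is the image of the Haar measure of a certain closed subgroup of $\prod_{b\in\mathscr{B}}\Z/b\Z$ under a factor map, hence is shift-invariant and ergodic; this is laid out in~\cite{Ab-Le-Ru,MR3803141}. Second, $\nu_\eta$ is non-atomic, which is the standing hypothesis of the corollary. Third, $\nu_\eta$ is ones-saturated: the classical fact recalled after Theorem~\ref{main inequalities} is that $\nu_\eta$ is a maximal density measure, i.e.\ $d_{\nu_\eta}=d(X_\eta,S)$ (see~\cite{Ke-Ri,Ba}); combining with Theorem~\ref{main inequalities}, which gives $d_{\nu_\eta}\le D_{\nu_\eta}\le D=d$, forces $D_{\nu_\eta}=D$, so $\nu_\eta$ is ones-saturated. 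Fourth, $d(X_\eta,S)=h(\widetilde{X}_\eta,S)=\widetilde{h}$, which is precisely Proposition~K of~\cite{MR3803141}.

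With all four hypotheses confirmed, Theorem~\ref{main theorem} gives that $\kappa=\nu_\eta\ast\Bern$ does not satisfy the Gibbs property~\eqref{gibbs} on $\widetilde{X}_\eta$. Intrinsic ergodicity of $(\widetilde{X}_\eta,S)$ together with the identification of its unique measure of maximal entropy as $\kappa$ (both from~\cite{MR3803141}) finishes the proof.

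There is no real obstacle here beyond bookkeeping: Theorem~\ref{main theorem} absorbs all the technical work, and this corollary is purely an instantiation. The only step requiring a moment's thought is the deduction that $\nu_\eta$ is ones-saturated from the stronger statement that it is a maximal density measure, which is where Theorem~\ref{main inequalities} enters.
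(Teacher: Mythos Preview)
Your proposal is correct and follows essentially the same route as the paper: the paragraph preceding the corollary collects exactly the ingredients you list (intrinsic ergodicity of $(\widetilde{X}_\eta,S)$ with $\kappa=\nu_\eta\ast\Bern$ as the unique measure of maximal entropy, $\nu_\eta$ being a maximal density measure and hence ones-saturated via Theorem~\ref{main inequalities}, and $d=\widetilde{h}$ from Proposition~K of~\cite{MR3803141}), and then invokes Theorem~\ref{main theorem}. The only thing you make explicit that the paper leaves implicit is the ergodicity of $\nu_\eta$, which is indeed needed for Theorem~\ref{main theorem} and is standard.
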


It follows from~\cite{MR3803141} that when $\mathscr{B}$ is finite\footnote{In this case $\eta$ is periodic, so the Mirsky measure is discrete.} then $\widetilde{X}_{\eta}$ is sofic, so its measure of maximal entropy has the Gibbs property, {see also Remark~\ref{r:skon} for a direct argument}. A description of those $\mathscr{B}$ for which the Mirsky measure has an atom (which seems to be of independent interest) is given in Proposition~\ref{atomowa}.

As a ``byproduct'', we also prove several results on $\mathscr{B}$-free systems that are of independent interest. In particular, we prove the following (which is the converse theorem to a recent result by Keller~\cite{Ke}, for the details, see Section~\ref{uzupelnienia B-free}):
\begin{Cor}\label{converse}
Let $\mathscr{B}\subset \N\setminus \{1\}$. If the Mirsky measure $\nu_\eta$ is of full support $X_\eta$ then $\mathscr{B}$ is taut.
\end{Cor}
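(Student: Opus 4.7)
We prove the contrapositive: assuming $\mathscr{B}$ is not taut, we will produce a cylinder in $X_\eta$ with zero $\nu_\eta$-measure. By the definition of tautness, there exists $b_0\in\mathscr{B}$ with $\bdelta(\mathcal{M}_{\mathscr{B}\setminus\{b_0\}})=\bdelta(\mathcal{M}_\mathscr{B})$. Setting $\mathscr{B}':=\mathscr{B}\setminus\{b_0\}$ and $\eta':=\raz_{\mathcal{F}_{\mathscr{B}'}}$, the discrepancy set $E:=\{n\in\Z:\eta_n\neq\eta'_n\}=\mathcal{M}_\mathscr{B}\setminus\mathcal{M}_{\mathscr{B}'}$ has logarithmic density zero (by additivity of $\bdelta$ on disjoint sets), and by primitivity of $\mathscr{B}$ we have $b_0\in E$, so in particular $E\neq\emptyset$.

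The first step is to show $\nu_\eta=\nu_{\eta'}$ as measures on $\{0,1\}^\Z$. Both are weak-$\ast$ limits of empirical averages along a subsequence realizing the lower natural density of $\mathcal{F}_\mathscr{B}$; since $\bdelta(\mathcal{F}_\mathscr{B})=\bdelta(\mathcal{F}_{\mathscr{B}'})$, a common such subsequence can be used. For any cylinder function $f$, $f(S^n\eta)$ and $f(S^n\eta')$ disagree only when a fixed-width neighbourhood of $n$ intersects $E$. Exploiting the Davenport--Erd\H{o}s equivalence of logarithmic and lower densities applied to $E$ itself (which has an $\mathcal{F}$-set structure after rescaling by $b_0$), one can pass to a further subsequence along which $|E\cap[1,N_k]|/N_k\to 0$, making the discrepancies between the two Ces\`aro averages vanish in the limit.

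The second and main step is to produce a block $W\in\mathcal{L}(X_\eta)\setminus\mathcal{L}(X_{\eta'})$. I would take $W:=\eta[n_0-L,n_0+L]$ for $n_0\in E$ sufficiently isolated (which exists because $E$ is log-density zero, so average gaps go to infinity) and $L$ large. Then $W_L=0$ while $W_{L+j}=\eta'_{n_0+j}$ for $j\neq 0$. For an occurrence of $W$ in $\eta'$ at some position $m-L$: matching the $1$'s of $W$ via the Chinese Remainder Theorem forces $m\equiv n_0\pmod{b}$ for every $b\in\mathscr{B}$ with $b\leq 2L+1$, and by primitivity of $\mathscr{B}$ this rules out every such small $b\in\mathscr{B}'$ dividing $m$. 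Hence the central zero $W_L=0$ would force some $b^\ast\in\mathscr{B}'$ with $b^\ast>2L+1$ to divide $m$. A more delicate sieving argument (enlarging $L$ to incorporate more of $\mathscr{B}'$ and exploiting the log-zero-density of $E$) is then required to eliminate such $b^\ast$, yielding the desired contradiction.

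Combining the two steps, the cylinder $[W]$ is nonempty in $X_\eta$ while $\nu_\eta([W])=\nu_{\eta'}([W])=0$, since $\operatorname{supp}(\nu_{\eta'})\subset X_{\eta'}$ and $W\notin\mathcal{L}(X_{\eta'})$. Hence $\nu_\eta$ is not of full support on $X_\eta$. The main obstacle is the sieving argument in the second step, which is the arithmetic heart of the proof: it must rule out large divisors $b^\ast\in\mathscr{B}'$ by leveraging both primitivity and the asymptotic structure of the log-zero-density set $E$, and is where the finer results on $\mathscr{B}$-free subshifts of Section~\ref{uzupelnienia B-free} are expected to enter.
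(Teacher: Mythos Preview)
Your proposal is a plan rather than a proof, and it diverges substantially from the paper's argument. The paper does not remove a single element $b_0$; instead it invokes the \emph{canonical} taut set $\mathscr{B}'$ associated to $\mathscr{B}$ (see~\eqref{t:taut}, quoted from~\cite{MR3803141}), for which the equality $\nu_\eta=\nu_{\eta'}$ is a cited fact and, crucially, Keller's Theorem~\ref{twierdzenie keller} applies to give $\operatorname{supp}(\nu_{\eta'})=X_{\eta'}$. The heart of the paper's proof is then the strict inclusion $X_{\eta'}\subsetneq X_\eta$ (relation~\eqref{relacja}), established by contradiction: assuming equality forces $\eta\in X_{\mathscr{B}'}$, so for each $b'\in\mathscr{B}'$ some residue class $b'\Z+r'$ lies in $\mathcal{M}_\mathscr{B}$; after dividing by $\gcd(b',r')$ and using that coprime arithmetic progressions are Behrend (Dirichlet), property~\eqref{behnowy} and primitivity yield $\mathscr{B}=\mathscr{B}'$. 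No explicit block is ever constructed.

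Your route has genuine gaps in both steps. In Step~1, knowing $\underline{d}(E)=0$ (which you correctly obtain via $E/b_0=\mathcal{F}_\mathscr{C}$) gives \emph{some} subsequence of $\N$ along which $|E\cap[1,N]|/N\to 0$, but not a subsequence of a prescribed $(N_k)$ realizing $\underline{d}(\mathcal{F}_\mathscr{B})$; you would need $|E\cap[1,N_k]|/N_k\to 0$ along the \emph{given} $(N_k)$, and your argument does not supply this. (The equality $\nu_\eta=\nu_{\eta'}$ for your $\mathscr{B}'=\mathscr{B}\setminus\{b_0\}$ is in fact true and follows from the machinery of~\cite{MR3803141}, but not from the sketch you gave.) Step~2 is the more serious issue, and you acknowledge it: matching the $1$'s of $W$ via CRT only pins down $m\bmod b$ for small $b$, and ruling out large $b^\ast\in\mathscr{B}'$ dividing $m$ is exactly where the difficulty lies. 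Note also that since your $\mathscr{B}'$ need not be taut, you cannot invoke Keller's theorem for it, so even knowing $W\in\mathcal{L}(X_{\eta'})$ would not preclude $\nu_{\eta'}(W)=0$; conversely, it is not a priori clear that $\mathcal{L}(X_\eta)\not\subset\mathcal{L}(X_{\eta'})$ for your $\mathscr{B}'$ (recall $\eta'\geq\eta$ here, the opposite of the paper's situation). The paper sidesteps all of this by working with the taut $\mathscr{B}'$, where Keller's theorem converts the measure-support question into the purely topological inclusion~\eqref{relacja}, and then settles the latter by the short number-theoretic contradiction above rather than by block-building.
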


Theorem~\ref{main theorem} goes beyond the $\mathscr{B}$-free context. For example, if $(X,S)$ is of zero topological entropy, it follows by Lemma 2.2.16 in~\cite{Ku-Le-We} that $d\geq \widetilde{h}$, {which}, together with~\eqref{nier}, yields $d=\widetilde{h}$. Thus, as a consequence of Theorem~\ref{main theorem}, we obtain the following:
\begin{Cor}\label{c:zeroen}
If $(X,S)$ is uniquely ergodic with $h(X,S)=0$, then $\nu\ast \Bern$ has no Gibbs property whenever the unique invariant measure $\nu$ is non-atomic.
\end{Cor}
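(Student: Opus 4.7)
The plan is to directly verify the three hypotheses of Theorem~\ref{main theorem} for $\nu$ and then invoke it; no new ideas seem to be needed beyond what is already collected in the excerpt. The three hypotheses to check are that $\nu$ is ergodic and non-atomic (ergodicity follows from unique ergodicity; non-atomicity is assumed), that $\nu$ is ones-saturated, and that $d=\widetilde h$.

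For ones-saturation, I would argue as follows. Unique ergodicity means that $\cM(X,S)=\{\nu\}$, so in particular the supremum in the definition
\[
d=\sup_{\mu\in\cM(X,S)}\mu([1])
\]
is attained by $\nu$, i.e.\ $d_\nu=d$. Since $\nu$ is ergodic, Theorem~\ref{main inequalities} gives the chain
\[
d=d_\nu\le D_\nu\le D=d,
\]
which forces $D_\nu=D$; hence $\nu$ is ones-saturated.

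For the equality $d=\widetilde h$, the inequality $\widetilde h\ge d$ is \eqref{nier}, which holds in full generality because in the hereditary closure one may freely turn ones into zeros. The reverse inequality $d\ge \widetilde h$ is precisely Lemma~2.2.16 of~\cite{Ku-Le-We} in the zero entropy case, exactly as signalled in the paragraph preceding the corollary.

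Having verified that $\nu\in\cM^e(X,S)$ is non-atomic and ones-saturated, and that $d=\widetilde h$, Theorem~\ref{main theorem} applies and yields that $\kappa=\nu\ast\Bern$ has no Gibbs property. The only point one has to be slightly careful about is that the ``ones-saturation'' deduction uses ergodicity of $\nu$ (otherwise Theorem~\ref{main inequalities} does not apply), but unique ergodicity supplies this for free. I do not expect any genuine obstacle here: the work is really done by Theorem~\ref{main theorem} and the cited Lemma 2.2.16, and this corollary is essentially their concatenation.
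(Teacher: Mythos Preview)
Your proof is correct and follows essentially the same route as the paper: the paper derives $d=\widetilde h$ from \eqref{nier} together with Lemma~2.2.16 of \cite{Ku-Le-We} under the zero-entropy hypothesis, and then invokes Theorem~\ref{main theorem}. The ones-saturation of $\nu$, which you spell out explicitly via Theorem~\ref{main inequalities}, is left implicit in the paper (it was already observed there that every maximal-density measure is ones-saturated, and the unique invariant measure is trivially maximal-density).
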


In~\cite{Ku-Le-We} also so-called Sturmian sequences are discussed.\footnote{Sturmian sequences yield strictly ergodic models of irrational rotations. For more information, we refer the reader to~\cite{Ku-Le-We}.} In particular, it is proved that the hereditary closure {of the system given by any} Sturmian sequence yields an intrinsically ergodic system whose measure of maximal entropy is of the form $\nu\ast \Bern$. Moreover, in this case we also have $d=\widetilde{h}$. Using again Theorem~\ref{main theorem}, we obtain:
\begin{Cor}\label{c:sturmian}
If $(\widetilde{X},S)$ is a Sturmian hereditary system then its measure of maximal entropy has no Gibbs property.
\end{Cor}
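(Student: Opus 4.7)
The plan is to deduce this corollary directly from Theorem~\ref{main theorem} applied to the unique invariant measure $\nu$ of the underlying Sturmian subshift $(X,S)$, whose hereditary closure is $(\widetilde{X},S)$. To do so, I would simply verify the three hypotheses of the theorem in turn.

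First, since Sturmian subshifts are (strictly) uniquely ergodic, their unique invariant measure $\nu$ is ergodic, i.e.\ $\nu\in\cM^e(X,S)$, and $\nu$ has full support in $X$. The latter immediately implies that $\nu$ is \emph{ones-saturated} on $(X,S)$: indeed, with full support every $W\in\cL(X)$ satisfies $\nu(W)>0$, so the sequence $\max_{W\in\cL_n,\ \nu(W)>0}\#_1 W$ coincides with $\max_{W\in\cL_n}\#_1 W$, yielding $D_\nu=D$. Second, $\nu$ is \emph{non-atomic}: a Sturmian subshift is measure-theoretically isomorphic to an irrational rotation equipped with Lebesgue measure, which has no atoms. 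Third, the equality $d=\widetilde{h}$ is exactly the fact recalled in the paragraph preceding the corollary (established in~\cite{Ku-Le-We} in the course of showing intrinsic ergodicity of the hereditary closure).

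With all three hypotheses in hand, Theorem~\ref{main theorem} gives that the convolution $\kappa=\nu\ast\Bern$ does not have the Gibbs property. By the result of~\cite{Ku-Le-We} also recalled in the paragraph preceding the corollary, $\kappa$ is precisely the (unique) measure of maximal entropy of $(\widetilde{X},S)$, which finishes the proof.

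This is essentially a direct application: there is no genuine obstacle, as all the structural input (intrinsic ergodicity, the convolution form of the measure of maximal entropy, and the equality $d=\widetilde{h}$) is imported from~\cite{Ku-Le-We}. The only point worth being careful about is the bookkeeping distinction between $X$ and $\widetilde{X}$: Theorem~\ref{main theorem} takes $\nu\in\cM^e(X,S)$ and then passes to $\widetilde{h}=h(\widetilde{X},S)$, so the Sturmian measure is used on the original $X$ (where full support ensures ones-saturation) while the Gibbs property is ruled out on $\widetilde{X}$, matching the statement of the corollary.
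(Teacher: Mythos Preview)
Your argument is correct and follows exactly the route indicated in the paper: the paragraph preceding the corollary records from \cite{Ku-Le-We} that the measure of maximal entropy of a Sturmian hereditary system is $\nu\ast\Bern$ and that $d=\widetilde{h}$, and the corollary is then an immediate consequence of Theorem~\ref{main theorem}. You have simply spelled out the hypothesis checks (ergodicity, ones-saturation via full support, non-atomicity via the irrational-rotation model) that the paper leaves implicit.
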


\section{Absence of Gibbs property}\label{dowody2}

For the proof of Theorem~\ref{main theorem} we need some simple lemmas.
			\begin{Lemma}\label{l:gi1}
Let $\nu\in \cM(X,S)$. Then for $\kappa=\nu\ast \Bern$, we have
\beq\label{e3}
						\kappa(C)=\sum_{\cL(X)\ni C'\geq C}\nu(C')\cdot 2^{-\no C'}
\eeq
for each $C\in \cL(\widetilde{X})$.
\end{Lemma}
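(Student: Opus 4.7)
The plan is to unwind the definition of $\kappa = Q_*(\nu \otimes \Bern)$ and then partition the preimage $Q^{-1}(C)$ according to the $X$-coordinate of length $n = |C|$.

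First I would write
\[
\kappa(C) = (\nu \otimes \Bern)\bigl(Q^{-1}(C)\bigr),
\]
and observe that $(x,y) \in Q^{-1}(C)$ iff $x_i y_i = c_i$ for $0 \le i \le n-1$, where $C = [c_0 \ldots c_{n-1}]$. In particular, any such $x$ satisfies $x[0,n-1] \geq C$ coordinatewise (since $c_i = 1$ forces $x_i = 1$), so $x[0,n-1]$ ranges over words $C' \in \cL_n(X)$ with $C' \geq C$. This gives a disjoint decomposition
\[
Q^{-1}(C) = \bigsqcup_{\substack{C' \in \cL_n(X) \\ C' \geq C}} \bigl(\{x : x[0,n-1]=C'\} \times A_{C',C}\bigr),
\]
where $A_{C',C} = \{y \in \{0,1\}^\Z : c'_i y_i = c_i \text{ for } 0 \le i \le n-1\}$.

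Next, for fixed $C' \geq C$, I would analyze $A_{C',C}$ coordinate by coordinate. At positions $i$ where $c'_i = 1$, the constraint forces $y_i = c_i$; at positions $i$ where $c'_i = 0$, we have $c_i = 0$ (since $C \leq C'$), so the equation $0 \cdot y_i = 0$ holds automatically and $y_i$ is free. Hence exactly $\no C'$ coordinates of $y$ are fixed and the rest are unconstrained, so by the product structure of $\Bern$,
\[
\Bern(A_{C',C}) = 2^{-\no C'}.
\]

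Finally, applying Fubini to the product measure $\nu \otimes \Bern$ on the disjoint decomposition yields
\[
\kappa(C) = \sum_{\substack{C' \in \cL_n(X) \\ C' \geq C}} \nu(C') \cdot \Bern(A_{C',C}) = \sum_{\cL(X) \ni C' \ge C} \nu(C') \cdot 2^{-\no C'},
\]
which is exactly \eqref{e3}. There is no real obstacle here; the only point requiring a moment of care is the implication $C \leq C'$ (so that the decomposition is indeed indexed by $C' \geq C$ and the constraints on $y$ at zero positions of $C'$ are vacuous), which follows directly from the coordinatewise multiplicative nature of $Q$.
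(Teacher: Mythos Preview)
Your proof is correct and follows essentially the same approach as the paper's: both unwind the definition $\kappa(C)=(\nu\otimes\Bern)(Q^{-1}(C))$, decompose the preimage according to the word $C'=x[0,n-1]\geq C$, and then observe that for fixed $C'$ exactly $\no C'$ coordinates of $y$ are determined while the rest are free, giving the factor $2^{-\no C'}$. The only cosmetic difference is that the paper writes the decomposition as a union over pairs $(C',D)$ with $C'\cdot D=C$ and then regroups, whereas you group by $C'$ from the outset.
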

\begin{proof}
For {$C\in \cL_{\ell}(\widetilde{X})$}, we have
$
\kappa(C)=\left(\nu\ot \Bern\right)(E)$,
where
$$
E:=\{(y,z)\in Y\times \{0,1\}^{\Z}: (y\cdot z)[0,\ell-1]=C\}=\bigcup_{\cL(Y)\ni C'\geq C,D\geq C, C'\cdot D=C} C'\times D.
$$
Moreover the sets $C'\times D$ in the above union are pairwise disjoint. The values of each $D$  are determined on the support of $C'$ and are arbitrary on its complement (if $C'[i]=1$ then $D[i]=C[i]$). Since
$$
\left(\nu\ot \Bern\right)\left(\bigcup_{D\geq C,C'\cdot D=C}C'\times D\right)=\nu(C')\cdot 2^{-\no C' },$$
the result follows.
\end{proof}
\begin{Remark}
	Notice that $\kappa=\nu\ast \Bern$ is descreasing in the sense that for any two words of {length} $n$, $W_1$, $W_2$  such that $W_1 \le W_2$, we have
				\begin{equation}\label{increasing property of kappa}
					\kappa\left(W_1\right) \geq \kappa\left(W_2\right).
				\end{equation}
\end{Remark}

			We will say that a block $C \in \cL(X)$ is \textit{ones-maximal} if
				\begin{equation}\label{definition of block being ones-maximal}
					\no C  = \max_{W\in \mathcal{\cL}_{|C|}(X)} \no W.
				\end{equation}
				Analogously, for any measure $\nu \in \cM(X)$, we will say that a block $C \in \cL(X)$ is $\nu$-\textit{ones-maximal} if
				\begin{equation}\label{definition of block being measure-ones-maximal}
					\no C  = \max_{W\in \mathcal{\cL}_{|C|}(X), \; \nu(W) > 0} \no W.
				\end{equation}
\begin{Remark}				
				Notice that if $C$ is $\nu$-ones-maximal (or ones-maximal) then \eqref{e3} reduces to
				\begin{equation}\label{formula for maximal blocks}
					\nu\ast \Bern(C)= \nu(C)\cdot 2^{-\no C}.
				\end{equation}
				{Note that the above formula also works for maximal blocks (in the sense of the coordinatewise order).}
\end{Remark}

			\begin{Lemma}\label{lemat atom}
				Let $\nu\in \cM(X,S)$ and $a > 0$. Suppose that there is a sequence of blocks $C_n$ such that $|C_n| \nearrow \infty$ and $\nu(C_n)\geq a$. Then there exists $(n_k)$ such that $\bigcap_{k\geq 1}C_{n_k}\neq \emptyset$. Moreover, we have $\nu(\{x\})\geq a$ for $\{x\}=\bigcap_{k\geq 1}C_{n_k}$.
			\end{Lemma}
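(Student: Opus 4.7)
The plan is to combine shift invariance of $\nu$ with a pigeonhole-and-diagonal argument and $\sigma$-continuity of the measure to produce the desired atom.

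First, I would re-center each $C_n$. Since $\nu$ is $S$-invariant, for every $i \in \Z$ the translated cylinder $\{y \in X : y[-i,|C_n|-1-i]=C_n\}$ has the same $\nu$-measure as $[C_n]$, and hence measure at least $a$. Choosing $i_n := \lfloor |C_n|/2 \rfloor$ and setting $\ell_n := i_n$, $r_n := |C_n|-1-i_n$ (so both $\ell_n, r_n \to \infty$), the ``centered'' cylinder $A_n := \{y \in X : y[-\ell_n, r_n] = C_n\}$ satisfies $\nu(A_n) \ge a$. This is the crucial step that replaces one-sided cylinders by two-sided ones.

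Next, I would apply pigeonhole and diagonalization. For every fixed $m \ge 1$, once $\ell_n, r_n \ge m$, the restriction $C_n[-m,m]$ lies in the finite set $\cL_{2m+1}(X)$, so some block $B_m \in \cL_{2m+1}(X)$ occurs as $C_n[-m,m]$ for infinitely many $n$. Iterating and diagonalizing over $m$ produces a subsequence $(n_k)$ such that for every $m$, $C_{n_k}[-m,m] = B_m$ for all $k$ large enough. The $B_m$ are automatically coherent ($B_m = B_{m+1}[-m,m]$) and therefore determine a unique $x \in \{0,1\}^\Z$ with $x[-m,m] = B_m$ for all $m$. Because $X$ is closed and each cylinder $E_m := \{y \in X : y[-m,m] = B_m\}$ is nonempty (it contains any point of $A_{n_k}$ for large $k$), picking $y_m \in E_m$ and passing to a convergent subsequence gives $x \in X$.

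Finally, $\sigma$-continuity of $\nu$ yields the measure estimate: since $E_m \supseteq A_{n_k}$ for all $k$ sufficiently large, we have $\nu(E_m) \ge a$. The sequence $(E_m)_m$ is decreasing with $\bigcap_m E_m = \{x\}$, so continuity of measure from above gives $\nu(\{x\}) = \lim_m \nu(E_m) \ge a$.

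The only genuine subtlety — and what I would highlight as the ``main obstacle'' — is noticing that a direct nested intersection of the original one-sided cylinders $[C_{n_k}]$ cannot possibly collapse to a single point of $\{0,1\}^\Z$, because it leaves the negative coordinates free. Shift invariance is precisely what repairs this: it lets us replace $[C_n]$ by the centered cylinder $A_n$ of the same $\nu$-mass, after which the nested intersection of the extracted cylinders $E_m$ really does shrink to a single point.
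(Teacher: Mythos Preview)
Your argument is correct and follows the same pigeonhole-plus-diagonal route as the paper's (very terse) proof. The one substantive difference is your re-centering step, and you are right to single it out: the paper works directly with the one-sided cylinders $C_{n_k}=\{y:y[0,|C_{n_k}|-1]=C_{n_k}\}$ and simply asserts $\bigcap_k C_{n_k}=\{x\}$, which, as you observe, cannot literally hold in a two-sided shift because the negative coordinates are left unconstrained. Your use of the shift invariance of $\nu$ to replace each $C_n$ by a centered cylinder $A_n$ of the same $\nu$-mass is a clean repair: the nested sets $E_m$ then genuinely shrink to a single point, and the atom estimate $\nu(\{x\})\ge a$ follows from continuity from above. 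So your proof is, if anything, more careful than the paper's on exactly the point you flag; for the intended application (Theorem~\ref{main theorem}) only the existence of an atom of mass $\ge a$ is needed, and both arguments deliver that.

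One minor notational comment: when you write ``$C_n[-m,m]$'' you mean the length-$(2m{+}1)$ subword of $C_n$ in positions $i_n-m,\dots,i_n+m$ (the central block after re-centering), not literal negative indices of the original word; this is clear from context but would be worth saying explicitly.
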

				\begin{proof}
					Notice that for any $k\geq 1$, there exists $B\in \cL_k(X)$ such that for infinitely many $n\in\N$, we have $C_n[0,k-1]=B$. Now, we apply a diagonal procedure to find $(n_k)$. Moreover, $\nu(\bigcap_{k\geq 1} C_{n_k})=\nu(\{x\})\geq a$.
				\end{proof}

				\begin{proof}[Proof of Theorem~\ref{main theorem}]
					For $n \in \N$, let $C_n\in\mathcal{L}_n$ be $\nu$-ones-maximal. Define $o_n := \no C_n$. For $\kappa=\nu\ast \Bern$, we have
					$$
					\kappa(C_n)\cdot 2^{n\widetilde{h}}=\nu(C_n)\cdot 2^{n\widetilde{h}-o_n}.
					$$
					Using~\eqref{main inequalities}, we have $d = D = D_\nu \le o_n/n$. Therefore, by the assumption that $\widetilde{h}=d$, we obtain $n\widetilde{h}-o_n\leq 0$. It follows by Lemma~\ref{lemat atom} that $\kappa$ fails to satisfy the Gibbs property.
				\end{proof}

\section{Mirsky measure}\label{uzupelnienia B-free}

\subsection{Topological support}
An important role in the theory of $\mathscr{B}$-free systems is played by the notion of tautness. Recall that for $\mathscr{B}\subset \N\setminus \{1\}$, we set $\mathcal{M}_\mathscr{B}:=\bigcup_{b\in\mathscr{B}}b\Z$. We say that $\mathscr{B}\subset \N\setminus \{1\}$ is \textit{taut}~\cite{Ha} if for any $b\in \mathscr{B}$, we have the following inequality between logarithmic densities: ${\boldsymbol{\delta}}(\mathcal{M}_\mathscr{B})>\boldsymbol{\delta}(\mathcal{M}_{\mathscr{B}\setminus\{b\}})$. It was shown in~\cite{MR3803141} (see Corollary 2.31 therein) that the tautness of $\mathscr{B}$ implies the following:
\begin{equation}\label{behnowy}
\text{if $\boldsymbol{\delta}(\mathcal{M}_{\mathscr{B}\cup\{a\}})=\boldsymbol{\delta}(\mathcal{M}_\mathscr{B})$ then $a\in\mathcal{M}_\mathscr{B}$.}
\end{equation}
Moreover, for each $\mathscr{B}\subset\N\setminus\{1\}$ there exists a unique taut $\mathscr{B}'$ such that 
\begin{equation}\label{t:taut}
\cf_{\mathscr{B}'}\subset\cf_{\mathscr{B}},\  \nu_{\eta}=\nu_{\eta'}\text{ and }\mathcal{M}(\widetilde{X}_\eta,S)=\mathcal{M}(\widetilde{X}_{\eta'},S),\text{ where }\eta'=\eta(\mathscr{B}')
\end{equation}
(see Thm.~C and 4.5 in \cite{MR3803141}).

\begin{Remark}
As a consequence of Theorem~\ref{twierdzenie keller} and~\eqref{t:taut}, we obtain another proof of the fact that the Mirsky measure is ones-saturated (both, in $X_\eta$ and in $\widetilde{X}_\eta$). Indeed, for a taut $\mathscr{B}$ we use Theorem~\ref{twierdzenie keller} and for other sets $\mathscr{B}$, we combine it with~\eqref{t:taut}.
\end{Remark}

Recall that $\mathscr{B}$ is said to be \textit{Behrend}~\cite{Ha} if $\bdelta(\mathcal{F}_\mathscr{B})=0$. Each infinite set of primes whose sums of reciprocals is infinite is Behrend (see (0.69) in~\cite{MR1414678}). Take $a,r\in\N$ with $\gcd(a,r)=1$. Dirichlet proved that $a\Z+r$ contains infinitely many primes and $\sum_{p\in (a\Z+r)\cap \mathscr{P}}1/p=+\infty$. Thus
\begin{equation}\label{behrenda}
a\Z+r \text{ is Behrend whenever }\gcd(a,r)=1.
\end{equation}

Recently, Keller proved the following:
\begin{Th}[\cite{Ke}]\label{twierdzenie keller}
If $\mathscr{B}$ is taut then $\nu_\eta$ has full support in $X_\eta$.
\end{Th}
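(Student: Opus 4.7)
The plan is to exploit the algebraic description of the Mirsky measure: $\nu_\eta$ is the pushforward of the normalized Haar measure on the compact abelian group
\[
H := \overline{\{(n \bmod b)_{b\in\mathscr{B}} : n \in \Z\}} \leq \prod_{b\in\mathscr{B}} \Z/b\Z
\]
under the admissibility map $\pi\colon H \to X_\eta$ defined by $\pi(h)_n = 1$ iff $h_b \not\equiv -n \pmod{b}$ for every $b \in \mathscr{B}$, and $\pi(H) = X_\eta$. Fix a block $B = B_0 \cdots B_{\ell-1} \in \cL_\ell(X_\eta)$ and put $P_0 := \{i : B_i = 0\}$, $P_1 := \{i : B_i = 1\}$. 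Full support of $\nu_\eta$ reduces to showing $\operatorname{Haar}_H(\pi^{-1}([B])) > 0$ for every such $B$.

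First I would pick some $h^\ast \in \pi^{-1}([B])$, non-empty by surjectivity of $\pi$, and for each $i \in P_0$ choose a specific $b_i \in \mathscr{B}$ with $h^\ast_{b_i} \equiv -i \pmod{b_i}$. The clopen subgroup $K := \{g \in H : g_{b_i} \equiv 0 \pmod{b_i} \text{ for all } i \in P_0\}$ has finite index in $H$, so the coset $h^\ast + K$ has positive Haar measure, and every $g$ in this coset satisfies the $P_0$-conditions automatically. What remains is to show that the ``forbidden'' subset
\[
F := \bigcup_{b \in \mathscr{B},\; i \in P_1} \{g \in h^\ast + K : g_b \equiv -i \pmod{b}\}
\]
does not fill $h^\ast + K$ up to a Haar null set.

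This is where tautness must intervene. Arguing by contradiction, assume $F$ equals $h^\ast + K$ modulo a Haar null set. Using the equidistribution of the diagonal copy of $\Z$ in $H$, translate this into the arithmetic statement that, for almost every integer $n$ in the progression $n_0 + L\Z$ (with $L := \operatorname{lcm}\{b_i : i \in P_0\}$ and $n_0$ an integer approximating $h^\ast$ in $H$), one has $n + i \in \mathcal{M}_\mathscr{B}$ for some $i \in P_1$. A sieving/inclusion-exclusion argument, combined with the Davenport--Erd\"os identity between lower and logarithmic densities of $\cf_\mathscr{B}$, should then produce an integer $a \notin \mathcal{M}_\mathscr{B}$ (exhibited by an actual occurrence of $B$ in $\eta$, where $n_0 + i$ is genuinely $\mathscr{B}$-free for each $i \in P_1$) such that $\bdelta(\mathcal{M}_{\mathscr{B} \cup \{a\}}) = \bdelta(\mathcal{M}_\mathscr{B})$, contradicting the tautness characterization~\eqref{behnowy}.

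The main obstacle is this final packaging: converting a Haar-theoretic ``covering'' of $h^\ast + K$ into a single offending $a$ for~\eqref{behnowy}. The coset structure imposed by the $P_0$-conditions interacts non-trivially with the unbounded family of residue conditions coming from $P_1$, and one needs tight control on this interaction; in particular, one cannot rely on a naive union bound when $\sum_{b \in \mathscr{B}} 1/b = \infty$. I expect the actual proof to proceed via a more delicate dispersion-type estimate exploiting the algebraic structure of $H$ rather than the crude union bound suggested above, but the overall architecture---algebraic model, isolate $P_0$ constraints via a coset, invoke tautness to defeat the $P_1$ constraints---should stand.
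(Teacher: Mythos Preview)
The paper does not prove this statement at all: Theorem~\ref{twierdzenie keller} is quoted from Keller's paper~\cite{Ke} and used as a black box in the proof of Corollary~\ref{converse} and of~\eqref{relacja}. There is therefore no proof in the paper to compare your proposal against.

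As for the proposal itself, you have set up the algebraic model correctly and isolated the genuine difficulty, but what you have written is an outline rather than a proof. You explicitly flag the ``final packaging'' step---turning the Haar-a.e.\ covering of $h^\ast+K$ by the residue classes coming from $P_1$ into a single integer $a\notin\mathcal{M}_\mathscr{B}$ with $\bdelta(\mathcal{M}_{\mathscr{B}\cup\{a\}})=\bdelta(\mathcal{M}_\mathscr{B})$---as the main obstacle, and you do not carry it out; you only speculate that ``a more delicate dispersion-type estimate'' should work. That speculation is not unreasonable, but note that the sieving argument you gesture at would need to handle the case $\sum_{b\in\mathscr{B}}1/b=\infty$, where the sets $\{g:g_b\equiv -i\}$ can genuinely have full Haar measure in aggregate even when each is small, so inclusion--exclusion alone cannot close the gap. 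Keller's actual argument in~\cite{Ke} is what you would need to consult to see how this step is executed.
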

The main ingredient in the proof of the converse result (i.e.\ Corollary~\ref{converse}) is the following. Suppose that $\mathscr{B}$ is not taut and let $\mathscr{B}'$ be the corresponding taut set, as in~\eqref{t:taut}. Then 
\begin{equation}\label{relacja}
{X}_{\eta'} \subsetneq {X}_\eta.
\end{equation}
Suppose for a moment that we have already proved~\eqref{relacja}.
\begin{proof}[Proof of Corollary~\ref{converse}]
Suppose that $\mathscr{B}$ is not taut. Let $\mathscr{B}'$ be the corresponding taut set, as in~\eqref{t:taut}. Then $\nu_{\eta}={\nu_{\eta'}}$. Moreover, by Theorem~\ref{twierdzenie keller}, $X_{\eta'}$ is the support of ${\nu_{\eta'}}$. It follows immediately from~\eqref{relacja} that the support of $\nu_\eta$ (equal to $X_{\eta'}$) is not full.
\end{proof}

\begin{proof}[Proof of~\eqref{relacja}] We will prove first that $X_{\eta'}\subset X_\eta$. By Theorem~\ref{twierdzenie keller}, $\nu_{\eta'}$ is of full support $X_{\eta'}$, i.e.\ each block appearing in $\eta'$ is of positive $\nu_{\eta'}$-measure. By~\eqref{t:taut}, we have $\nu_\eta=\nu_{\eta'}$, i.e.\ each block appearing in $\eta'$ is of positive $\nu_\eta$-measure. Since $\eta$ is a quasi-generic point for $\nu_\eta$, each block of positive $\nu_\eta$-measure appears on $\eta$. Therefore, each block appearing in $\eta'$ appears also on $\eta$, which gives $X_{\eta'}\subset X_\eta$.

Suppose now that ${X}_{\eta'} = {X}_\eta$. In particular, we have $\eta\in{X}_{\eta'}\subset X_{\mathscr{B}'}$.\footnote{{Recall that $X_\mathscr{B'}$ is the so-called admissible subshift, i.e.\ $x\in X_\mathscr{B'}$ iff $|\text{supp }x\bmod b'|<b'$ for each $b'\in \mathscr{B}'$ (by $\text{supp }x$ we understand the set $\{n\in\Z : x[n]\neq 0\}$). See~\cite{sarnak-lectures}, for the square-free case.}} Therefore, for each $b'\in\mathscr{B}'$, there exists $1\leq r'\leq b'$ such that
	$\mathcal{F}_\mathscr{B} \cap (b'\Z+r')=\emptyset$,
	i.e.\ $b'\Z+r'\subset \mathcal{M}_\mathscr{B}$. Let $d=\gcd(b',r')$. For $b'':=b'/d$, $r'':=r'/d$, we have
	$$d(b''\Z+r'')\subset \mathcal{M}_\mathscr{B}{\subset \cm_{\mathscr{B}'}}.$$
It follows by this and by~\eqref{behrenda} that
$ \bdelta(\mathcal{M}_{\mathscr{B}'})=
\bdelta(\mathcal{M}_{\mathscr{B}'\cup\{d\}})$.
	By~\eqref{behnowy}, we obtain $d\in\mathcal{M}_{\mathscr{B'}}$. Hence, there exists $b'''\in\mathscr{B}'$ such that $b''' \divides d$, i.e.\ we have $b''' \divides d \divides b'$. Thus, by the primitivity of $\mathscr{B}'$, we obtain $b'''=d=b'$. Therefore, $r'=b'$ and we conclude that $b'\Z\subset \mathcal{M}_\mathscr{B}$. Since $b'\in\mathscr{B}'$ was arbitrary, it follows that $\mathcal{M}_\mathscr{B}=\mathcal{M}_{\mathscr{B}'}$. Now, it remains to use the primitivity of $\mathscr{B}$ and $\mathscr{B}'$ to conclude that $\mathscr{B}=\mathscr{B}'$. This yields a contradiction and completes the proof.
\end{proof}

As an immediate consequence of Corollary~\ref{converse}, we obtain the following:
\begin{Cor}\label{max dens not full support}
Suppose that $\mathscr{B}$ is not taut. Then {the Mirsky measure $\nu_{\eta}$} has maximal density in $X_\eta$, but does not have full support.
\end{Cor}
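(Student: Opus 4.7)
The proof is essentially an immediate combination of two facts established earlier in the paper, so the plan is short. The first ingredient is the Remark (following Theorem~\ref{main inequalities}) asserting that for any $\mathscr{B}\subset\N\setminus\{1\}$, the Mirsky measure $\nu_\eta$ is a measure of maximal density for $(X_\eta,S)$ (this is the standard cut-and-project fact, referenced to Keller--Richard and Baake). This gives the ``maximal density'' half of the corollary with no further work and does not use the tautness hypothesis at all.

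The second ingredient is the contrapositive of Corollary~\ref{converse}. Corollary~\ref{converse} says: if $\nu_\eta$ has full support $X_\eta$ then $\mathscr{B}$ is taut. Taking the contrapositive, if $\mathscr{B}$ is not taut then $\nu_\eta$ cannot have full support in $X_\eta$. This supplies the second half.

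Thus my plan is simply: (i) invoke the Remark to record $d_{\nu_\eta}=d(X_\eta,S)$, and (ii) invoke Corollary~\ref{converse} contrapositively to conclude that $\operatorname{supp}\nu_\eta\subsetneq X_\eta$. There is no real obstacle; the only thing worth a line is making explicit that the two statements combine without any additional hypothesis on $\mathscr{B}$ beyond $\mathscr{B}\subset\N\setminus\{1\}$ (in particular, heredity of $X_\eta$ is not needed, since the maximal-density property of $\nu_\eta$ and Corollary~\ref{converse} are both stated for arbitrary $\mathscr{B}$). In effect the corollary is just a repackaging: the existence of a non-taut $\mathscr{B}$ produces an example, within the $\mathscr{B}$-free world, of a measure realizing the density $d$ without being fully supported, which is precisely the phenomenon alluded to in the Remark after Theorem~\ref{main inequalities}.
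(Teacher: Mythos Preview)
Your proposal is correct and matches the paper's own treatment exactly: the paper states this corollary as ``an immediate consequence of Corollary~\ref{converse}'', with the maximal-density half coming from the Remark (the cut-and-project fact) and the lack of full support from the contrapositive of Corollary~\ref{converse}. There is nothing to add.
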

\begin{Remark}
In~\cite{Dy}, it was shown that $\mathscr{B}$-free systems that are minimal (equivalently, Toeplitz~\cite{MR3803141}), are necessarily taut. Notice that this also follows immediately from Corollary~\ref{converse}, as in minimal systems all invariant measures have full support.
\end{Remark}

\paragraph{Atomic Mirsky measure}

\subsection{Atoms}
 We will now describe all sets $\mathscr{B}$ for which the Mirsky measure is atomic.

\begin{Prop}\label{p:atom}
The Mirsky measure $\nu_{\eta}$ is atomic if and only if the taut  set $\mathscr{B}'$ given by~\eqref{t:taut} is finite.
\end{Prop}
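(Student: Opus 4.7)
The plan is to first reduce to the taut case using \eqref{t:taut}, which gives $\nu_\eta=\nu_{\eta'}$, so atomicity of $\nu_\eta$ is equivalent to atomicity of $\nu_{\eta'}$. Thus I work with the taut set $\mathscr{B}'$ throughout, which unlocks Keller's Theorem~\ref{twierdzenie keller}.

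For the easy direction $(\Leftarrow)$, if $\mathscr{B}'$ is finite then $\eta'=\raz_{\cf_{\mathscr{B}'}}$ is periodic (of period at most $\mathrm{lcm}(\mathscr{B}')$), so $X_{\eta'}$ is finite. Any invariant probability measure on a finite set is purely atomic, so in particular $\nu_{\eta'}=\nu_\eta$ is atomic.

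For the direction $(\Rightarrow)$, suppose $\nu_{\eta'}$ has an atom. The Mirsky measure is ergodic (it is the image under a continuous equivariant map of the Haar measure on the smallest closed subgroup of $\prod_{b\in\mathscr{B}'}\Z/b\Z$ containing $(1,1,\dots)$; the translation by $(1,1,\dots)$ is minimal, so Haar measure is ergodic and the factor $\nu_{\eta'}$ inherits this). An ergodic measure with an atom is supported on a single finite orbit: by $S$-invariance all atoms share the same positive mass, so there are only finitely many, and ergodicity concentrates the measure on one orbit. By Keller's Theorem~\ref{twierdzenie keller}, $\mathrm{supp}(\nu_{\eta'})=X_{\eta'}$; hence $X_{\eta'}$ itself is finite, which means $\eta'$ is periodic. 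Call $p$ its period.

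The main obstacle, and the remaining step, is to pass from periodicity of $\eta'$ with period $p$ to finiteness of $\mathscr{B}'$. I claim every $b\in\mathscr{B}'$ divides $p$. Indeed, periodicity of $\eta'$ says $\cm_{\mathscr{B}'}+p\Z=\cm_{\mathscr{B}'}$. Since $b\Z\subset\cm_{\mathscr{B}'}$ we get
$$\gcd(b,p)\Z=b\Z+p\Z\subset\cm_{\mathscr{B}'}.$$
Set $d:=\gcd(b,p)$. Then $\cm_{\mathscr{B}'\cup\{d\}}=\cm_{\mathscr{B}'}$, so the densities agree, and tautness via \eqref{behnowy} forces $d\in\cm_{\mathscr{B}'}$. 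Hence some $b''\in\mathscr{B}'$ divides $d$; since $d\mid b$ we obtain $b''\mid b$, and by primitivity of $\mathscr{B}'$ we conclude $b''=b=d$, so $b\mid p$. Thus $\mathscr{B}'$ is contained in the (finite) set of divisors of $p$, completing the proof.
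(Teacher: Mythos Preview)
Your proof is correct and takes a genuinely different route from the paper's. The paper reduces to the taut case just as you do, but then dispatches both directions in one stroke by invoking Theorem~F of~\cite{MR3803141}: for taut $\mathscr{B}$, the system $(X_\eta,\nu_\eta,S)$ is measure-theoretically isomorphic to a rotation on a compact Abelian group (the inverse limit of the cyclic groups $\Z/{\rm lcm}(\{b\in\mathscr{B}:b\le K\})\Z$) with its Haar measure; Haar measure is atomic iff the group is finite iff $\mathscr{B}$ is finite. Your argument instead stays within results already quoted in this paper: ergodicity of the Mirsky measure together with Keller's Theorem~\ref{twierdzenie keller} force $X_{\eta'}$ to be finite once $\nu_{\eta'}$ has an atom, and then an elementary number-theoretic step (B\'ezout plus primitivity) pins each $b\in\mathscr{B}'$ as a divisor of the period. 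Your route is more self-contained relative to this paper; the paper's is shorter but imports an extra structural theorem. One minor redundancy: after establishing $d\Z\subset\mathcal{M}_{\mathscr{B}'}$ you already have $d\in\mathcal{M}_{\mathscr{B}'}$ directly, so the detour through~\eqref{behnowy} is not needed there.
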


\begin{proof}
Clearly, if $\mathscr{B}'$ is finite then the corresponding Mirsky measure is atomic. We will prove now the other implication. In view of~\eqref{t:taut}, we can assume that $\mathscr{B}$ itself is taut, and we need to prove that in this case $\mathscr{B}$ is finite. But if $\mathscr{B}$ is taut then by Theorem~F in~\cite{MR3803141} the measure-theoretic dynamical system $(X_{\eta},\nu_{\eta},S)$ is isomorphic
to {a} rotation on a certain compact Abelian group considered with Haar measure. However, Haar measure has an atom if and only if the group is finite. Since the group is given by the inverse limit of
cyclic groups $\Z/{\rm lcm}(\{b\in \mathscr{B}: b\leq K\})$, $K\geq 1$, $\mathscr{B}$ itself is finite.
\end{proof}
\begin{Cor}\label{atomowa}
The Mirsky measure $\nu_{\eta}$ is atomic if and only if for some $k,\ell\geq 1$
\begin{equation}\label{postac}
\mathscr{B}=c_1\mathscr{B}_1\cup \dots \cup c_k\mathscr{B}_k \cup \{c'_1,\dots,c'_\ell\},
\end{equation}
with $\mathscr{B}_1,\dots,\mathscr{B}_k$ being Behrend.
\end{Cor}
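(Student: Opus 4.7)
The plan is to invoke Proposition~\ref{p:atom}, which reduces the problem to showing that $\mathscr{B}'$ is finite if and only if $\mathscr{B}$ admits a decomposition of the form~\eqref{postac}.

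For the direction $(\Leftarrow)$, given a decomposition $\mathscr{B}=c_1\mathscr{B}_1\cup\dots\cup c_k\mathscr{B}_k\cup\{c'_1,\dots,c'_\ell\}$ with each $\mathscr{B}_i$ Behrend, form the candidate $\mathscr{A}$ consisting of the primitive elements of $\{c_1,\dots,c_k,c'_1,\dots,c'_\ell\}$. Then $\mathscr{A}$ is finite and primitive, and any finite primitive set is automatically taut: for each $a\in\mathscr{A}$ the set $a\Z\setminus\cm_{\mathscr{A}\setminus\{a\}}$ is periodic with period $\mathrm{lcm}(\mathscr{A})$ and contains $a$ by primitivity, so has positive natural density, forcing $\delta(\cm_\mathscr{A})>\delta(\cm_{\mathscr{A}\setminus\{a\}})$. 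Moreover $\cm_\mathscr{B}\subset\cm_\mathscr{A}$ since every $b\in\mathscr{B}$ is divisible by some $c_i$ or $c'_j$, and $\cm_\mathscr{A}\setminus\cm_\mathscr{B}\subset\bigcup_i c_i\cf_{\mathscr{B}_i}$ has logarithmic density $0$ by the Behrend hypothesis. The uniqueness clause of~\eqref{t:taut} then identifies $\mathscr{A}=\mathscr{B}'$, which is finite.

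For $(\Rightarrow)$, assume $\mathscr{B}'=\{a_1,\dots,a_m\}$ is finite. The inclusion $\cf_{\mathscr{B}'}\subset\cf_\mathscr{B}$ forces every $b\in\mathscr{B}$ to be a multiple of some $a_i$, so setting $\mathscr{D}_i:=\{b/a_i:b\in\mathscr{B},\ a_i\mid b\}$ we have $\mathscr{B}=\bigcup_{i=1}^m a_i\mathscr{D}_i$. Two cases arise: if $a_i\in\mathscr{B}$, primitivity of $\mathscr{B}$ forces $\mathscr{D}_i=\{1\}$, so $a_i$ goes into the list of $c'_j$; if $a_i\notin\mathscr{B}$, then $\mathscr{D}_i\subset\N\setminus\{1\}$ and $a_i$ becomes a $c_i$ with $\mathscr{B}_i:=\mathscr{D}_i$. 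This produces \eqref{postac} up to verifying that $\mathscr{B}_i$ is Behrend in the second case.

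The main obstacle is exactly this Behrend property. The starting point is the equality $\bdelta(\cf_\mathscr{B})=\bdelta(\cf_{\mathscr{B}'})$ (implicit in $\nu_\eta=\nu_{\eta'}$), which rewrites as $\bdelta(\cm_{\mathscr{B}'}\setminus\cm_\mathscr{B})=0$. On the exclusive zone $E_i:=a_i\Z\setminus\bigcup_{j\neq i}a_j\Z$, which has positive density by primitivity of $\mathscr{B}'$, any element of $\mathscr{B}$ dividing an integer in $E_i$ must itself belong to $a_i\mathscr{D}_i$, so $E_i\cap\cf_\mathscr{B}=a_i(\tilde E_i\cap\cf_{\mathscr{D}_i})$ with $\tilde E_i:=E_i/a_i$, forcing $\bdelta(\tilde E_i\cap\cf_{\mathscr{D}_i})=0$. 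Upgrading this zonal vanishing to full Behrend-ness $\bdelta(\cf_{\mathscr{D}_i})=0$ invokes the explicit construction of the taut equivalent from Theorem~C of~\cite{MR3803141}: $\mathscr{B}'$ is obtained from $\mathscr{B}$ precisely by collapsing Behrend fibers over its elements, so a non-Behrend fiber $\mathscr{D}_i$ over $a_i\in\mathscr{B}'\setminus\mathscr{B}$ would contradict the identification of $\mathscr{B}'$ as the taut equivalent.
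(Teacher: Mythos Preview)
Your argument is correct, and in the end it rests on the same external input the paper uses: the explicit description of the taut equivalent $\mathscr{B}'$ from Section~4.2 of~\cite{MR3803141}. The paper simply quotes that description---formulas~\eqref{bprim} and~\eqref{setb}---and reads off both directions at once: $\mathscr{B}'$ is finite exactly when $\mathscr{B}\setminus(c_1\Z\cup\dots\cup c_n\Z)$ is finite, which is~\eqref{postac}.

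Your route differs in that you attempt to be more self-contained. For $(\Leftarrow)$ you build the candidate $\mathscr{A}$ by hand, verify it is taut, and compare densities; this is a genuine alternative to citing the construction, though the appeal to ``the uniqueness clause of~\eqref{t:taut}'' needs one more line: equality of logarithmic densities together with $\cf_\mathscr{A}\subset\cf_\mathscr{B}$ and finiteness of $\mathscr{A}$ forces $\eta$ and $\eta(\mathscr{A})$ to differ on a set of vanishing density along the subsequence defining $\nu_\eta$, whence $\nu_{\eta(\mathscr{A})}=\nu_\eta$ and the uniqueness applies. For $(\Rightarrow)$ your zonal density computation on the exclusive set $E_i$ is correct and illuminating, but you do not manage to upgrade $\bdelta(\tilde E_i\cap\cf_{\mathscr{D}_i})=0$ to $\bdelta(\cf_{\mathscr{D}_i})=0$ directly; instead you invoke the very construction from~\cite{MR3803141} that the paper cites. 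At that point the preceding density work becomes superfluous: once you grant that $\mathscr{B}'$ arises by collapsing Behrend fibers, the decomposition~\eqref{postac} follows immediately, as in the paper's argument.
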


\begin{proof}
Let $\mathscr{B}'$ be as in~\eqref{t:taut}. It follows by the construction of the taut set $\mathscr{B}'$ in Section 4.2 in~\cite{MR3803141} that either
\begin{equation}\label{bprim}
\mathscr{B}'=(\mathscr{B}\setminus (c_1\Z\cup\dots {\cup} c_n\Z)) \cup \{c_1,\dots,c_n\}
\end{equation}
and
\begin{equation}\label{setb}
\mathscr{B}=(\mathscr{B}\setminus (c_1\Z\cup\dots {\cup} c_n\Z)) \cup (c_1\mathscr{B}_1\cup \dots\cup c_n\mathscr{B}_n)
\end{equation}
for some $n\geq 1$ and some Behrend sets $\mathscr{B}_1,\dots, \mathscr{B}_n$ or
$$
\mathscr{B}'=(\mathscr{B}\setminus  \bigcup_{n\geq 1}c_n\Z) \cup \{c_n : n\geq 1\}
$$
and
$$
\mathscr{B}=(\mathscr{B}\setminus  \bigcup_{n\geq 1}c_n\Z) \cup \bigcup_{n\geq 1}c_n\mathscr{B}_n
$$
for some Behrend sets $\mathscr{B}_n,n\geq 1$.

The finiteness of $\mathscr{B}'$ means that \eqref{bprim} and \eqref{setb} hold for some $n\geq 1$. In particular, the set
$\mathscr{B}\setminus (c_1\Z\cup\dots \cup c_n\Z)$ is finite, i.e.\ \eqref{postac} holds.
\end{proof}

		\section{Gibbs property vs entropy}\label{gibbs1}
			By~\eqref{variational principle for entropy}, notice that if $h =0$ then $h_\kappa = 0$ for any $\kappa \in \cM$. In general, if $h>0$, it is hard to say for which $\kappa$ we have $h_\kappa > 0$. The situation changes if we assume that the $\kappa$ has the Gibbs property (and full support).  	
				\begin{Prop}
					Suppose that $\kappa \in \cM$ has full support and satisfies the Gibbs property~\eqref{gibbs}. Then
						\begin{equation}\label{inequality entropy of gibbs measure is positive if entropy is pos}
							h > 0 \Rightarrow h_\kappa \ge ah.\footnote{When $h=0$ then clearly $h_\kappa\ge ah$ also holds.}
						\end{equation}
				\end{Prop}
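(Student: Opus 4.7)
The plan is to bound $h_\kappa(\cL_n) = -\sum_{C \in \cL_n} \kappa(C) \log \kappa(C)$ from below term by term, divide by $n$, and pass to the limit. Two inputs will be used: (a) full support of $\kappa$ makes every $C \in \cL_n$ of positive $\kappa$-measure, so~\eqref{gibbs} gives $\kappa(C) \geq a \cdot 2^{-nh}$ for \emph{every} $C \in \cL_n$; (b) the identity $h = \inf_n \frac{1}{n}\log|\cL_n|$ from~\eqref{definition of topological entropy} yields $|\cL_n| \geq 2^{nh}$.

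Next, I would exploit the shape of $\phi(x) = -x\log x$: it is increasing on $[0, 1/e]$ and decreasing on $[1/e, 1]$, so it is not monotone on all of $[0,1]$. Because $h > 0$, for all $n$ large enough $a \cdot 2^{-nh} \leq 1/e$; for every block $C$ with $\kappa(C) \leq 1/e$, monotonicity then gives $\phi(\kappa(C)) \geq \phi(a \cdot 2^{-nh}) = a \cdot 2^{-nh}(nh - \log a)$. A pigeonhole argument (from $\sum_C \kappa(C) = 1$) bounds the number of blocks with $\kappa(C) > 1/e$ by $\lfloor e \rfloor = 2$; their contribution to $h_\kappa(\cL_n)$ is nonnegative and may simply be discarded.

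Putting these together gives
\[
h_\kappa(\cL_n) \;\geq\; (|\cL_n| - 2)\cdot a\cdot 2^{-nh}(nh - \log a) \;\geq\; a\left(1 - 2\cdot 2^{-nh}\right)(nh - \log a),
\]
after which dividing by $n$ and letting $n \to \infty$ (using $h > 0$ once more, so that $2^{-nh} \to 0$) yields $h_\kappa \geq a h$. The only delicate point I anticipate is the handling of blocks with $\kappa(C) > 1/e$, where $\phi$ is no longer monotone and the Gibbs lower bound would push $\phi(\kappa(C))$ in the \emph{wrong} direction; this is defused by the pigeonhole observation above, so the rest is just routine bookkeeping.
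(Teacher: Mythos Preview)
Your proposal is correct and follows essentially the same route as the paper: bound $h_\kappa(\cL_n)$ from below by applying the Gibbs lower bound termwise on the range where $\phi(x)=-x\log x$ is increasing, discard the finitely many ``large'' blocks via pigeonhole, invoke $|\cL_n|\ge 2^{nh}$, divide by $n$ and let $n\to\infty$. The only cosmetic difference is the cutoff: the paper splits at $1/2$ (at most one exceptional block), you split at $1/e$ (at most two). Your choice is in fact the accurate one, since $\phi$ has its maximum at $1/e$ regardless of the base of the logarithm; the paper's statement that $\phi$ is increasing on $[0,1/2]$ is a small slip, harmless for the argument.
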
		
					 \begin{proof}
					 	Let $\ell_n = \left|\cL_n\right|$. Notice that \eqref{definition of topological entropy} implies that $\frac{\log \ell_n}{n} \ge h$ for any $n \in \N$, i.e.~we have
						\begin{equation}\label{toto}
						\ell_n\geq 2^{nh}.
						\end{equation}
						
						 Moreover, {the} function $x \mapsto -x \log x$ is increasing for  $x\le 1/2$. Due to the full support of $\kappa$ and the Gibbs property~{\eqref{gibbs},  we obtain}
\begin{equation}\label{ee1}
 -\sum_{W \in \cL_n} \kappa\left(W\right)\log\kappa\left(W\right)\ge \sum_{W \in \cL_n,\; \kappa\left(W\right) \le 1/2} a 2^{-nh}\left[nh - \log\left(a\right)\right].
\end{equation}
Since only one atom of the partition given by $\cL_n$ can have the measure larger than $\nicefrac12$, it follows that
\begin{equation}\label{ee2}
 \sum_{W \in \cL_n,\; \kappa\left(W\right) \le 1/2} a 2^{-nh}\left[nh - \log\left(a\right)\right] \ge(\ell_n - 1) a 2^{-nh}\left[nh - \log\left(a\right)\right].
\end{equation}
Now, we apply~\eqref{toto} to get
							\begin{equation}
								\begin{split}\label{ee3}
(\ell_n - 1) a 2^{-nh}\left[nh - \log\left(a\right)\right] &\ge a \left(2^{nh} - 1\right)2^{-nh}\left[nh - \log\left(a\right)\right] \\
									& = a \left(1 - 2^{-nh}\right)\left(nh - \log\left(a\right)\right).
								\end{split}
							\end{equation}
							Combining~\eqref{ee1}, \eqref{ee2} and \eqref{ee3}, we obtain
							\begin{equation*}
								h_\kappa {\xleftarrow[n\to\infty]{}} \frac{-\sum_{W \in \cL_n} \kappa\left(W\right)\log\left(\kappa\left(W\right)\right)}{n} \ge  a \left(1 - 2^{-nh}\right)\left(h - \frac{\log\left(a\right)}{n}\right) {\xrightarrow[n\to\infty]{}}  a h			
							\end{equation*}
						and the result follows. 	
					\end{proof} 	

				\begin{Remark}
					If $X = \{0,1\}^{\Z}$ and $\kappa$ is an ergodic measure of full support with the Gibbs property then
						\begin{equation*}
							\kappa = \Bern.
						\end{equation*}
						Indeed, the inequality in \eqref{gibbs} can be rewritten as $\kappa(C) \geq a\cdot \Bern(C)$ for each block $C$. We obtain $\kappa = \Bern$  by the ergodicity of $\kappa$ and $\Bern$.	
				\end{Remark}
			
Note also that if $h = 0$ then $\kappa$ cannot have the Gibbs property unless it is purely atomic.

{Finally, let us give the following observation concerning the rate of convergence in the formula for topological entropy.
			\begin{Lemma}
				Suppose we can find $\nu \in \cM(X,S)$ such that $\kappa=\nu\ast \Bern$ satisfies Gibbs property~\eqref{gibbs} and has full support then
				\begin{equation*} \label{gibbs necessary condition for conv 2 rate of convergence of top entr}
					0 \le \frac{\log\left(\ell_n\right)}{n} - h \le \frac{1}{n}\log\left(\frac{1}{a}\right)\textnormal{, for every } n \in \N.
				\end{equation*}
				where $\ell_n = |\cL_n(X)|$.
				\end{Lemma}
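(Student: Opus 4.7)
The plan is to handle the two inequalities separately. The left-hand inequality $0\le \log(\ell_n)/n - h$ is simply a restatement of the $\inf$-formula in \eqref{definition of topological entropy}: since $(|\cL_n|)$ is submultiplicative, $h = \inf_n \log(\ell_n)/n$, so each individual $\log(\ell_n)/n$ lies above $h$. Nothing to prove there beyond quoting \eqref{definition of topological entropy}.

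For the right-hand inequality, the key observation is that full support of $\kappa$ makes the Gibbs lower bound \eqref{gibbs} apply to \emph{every} block in $\cL_n(X)$, not merely to blocks of positive $\kappa$-measure. Once this is in hand, I would simply sum the Gibbs bound over all length-$n$ cylinders and use that $\kappa$ is a probability measure:
\[
1 \;=\; \sum_{C \in \cL_n(X)} \kappa(C) \;\ge\; \sum_{C \in \cL_n(X)} a\cdot 2^{-nh} \;=\; \ell_n\, a\, 2^{-nh}.
\]
Rearranging gives $\ell_n \le 2^{nh}/a$, and taking $\log_2$ and dividing by $n$ yields
\[
\frac{\log(\ell_n)}{n} - h \;\le\; \frac{1}{n}\log\!\left(\frac{1}{a}\right),
\]
as required.

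I do not anticipate any real obstacle: the argument is a one-line summation once the full-support hypothesis has been used to deploy \eqref{gibbs} uniformly. The only subtlety worth flagging in the write-up is exactly that justification step, namely that under full support the condition ``positive $\kappa$-measure'' in the Gibbs property is automatic for all of $\cL_n(X)$, so no block is lost in the summation.
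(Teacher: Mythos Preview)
Your proof is correct and arrives at the same inequality $\ell_n\,a\,2^{-nh}\le 1$ as the paper, but by a slightly cleaner route. The paper first invokes the decreasing property~\eqref{increasing property of kappa} of $\kappa=\nu\ast\Bern$ to locate a block $W_n^{\min}$ of minimal $\kappa$-measure among $\cL_n(X)$ (necessarily a coordinatewise-maximal block), then uses $\kappa(W_n^{\min})\le 1/\ell_n$ together with the Gibbs lower bound on that single block. Your direct summation over all of $\cL_n(X)$ bypasses this step entirely and never uses the convolution structure of $\kappa$; it works for \emph{any} fully supported measure with the Gibbs property, which is a small gain in generality and clarity.
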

			\begin{proof}
	It follows from the ``decreasing property''~\eqref{increasing property of kappa} that for any $n\in\N$ there exists a maximal word (in the sense of the coordinatewise order) $W_n^{min}\in\cL_n(X)$ such that for every $W_n\in\cL_n(X)$, we have $\kappa\left(W_n\right) \ge \kappa\left(W_n^{min}\right)$. Then
					$$\kappa\left(W_n^{min}\right) \le \frac{1}{\ell_n}.$$
				Now, taking advantage of the Gibbs property, we get
				$$
						a 2^{-hn}  \le\kappa(W_n^{min})= \nu\left(W_n^{min}\right)2^{-|W_n^{min}|}  \le 2^{-\log\left(\ell_n\right)}.
				$$
				Thus
				$$
				a \le 2^{-n\left[\frac{\log\left(\ell_n\right)}{n} - h\right]}
				$$
				and finally
				$$
				n\left[\frac{\log\left(\ell_n\right)}{n} - h\right] \le \log\left(\frac{1}{a}\right),
				$$
				which gives the desired rate of convergence.
			\end{proof}	}

\section{Hereditary sofic systems}\label{gibbs2}
Fix a finite alphabet $\mathsf{A}$ and let $(G,\mathsf{L})$ be a labeled graph, i.e.\ $G$ is a graph with edge set $E$ and the labeling $\mathsf{L}\colon E\to \mathsf{A}$. Then $X\subset \mathsf{A}^\Z$ arising by reading the labels along the paths on $G$ is called \textit{sofic} (this term was coined by Weiss~\cite{We0} and there are several equivalent ways to define sofic subshifts, see also~\cite{Li-Ma}). For us, $\mathsf{A}=\{0,1\}$. 

Notice that for a sofic subshift $X\subset \{0,1\}^\Z$, the subshift $\widetilde{X}$ is also sofic. Indeed, take a corresponding labeled graph $(G,\mathsf{L})$ for $X$ and define $(\widetilde{G},\widetilde{\mathsf{L}}) $ as follows: for each edge in $G$ labeled with $1$ add an extra edge between the same vertices and label it with $0$. Clearly, the subshift resulting by reading the labels along the paths in the new graph is nothing but $\widetilde{X}$. Recall also that a finite union of sofic shifts remains sofic (to see this, it suffices to consider the corresponding graphs and take their disjoint union).

	\begin{Remark}
		It was shown in~\cite{MR3803141} that that for each finite $\mathscr{B}\subset \mathbb{N}\setminus\{1\}$, both $\widetilde{X}_\eta$
 and $X_\mathscr{B}$ are sofic. A simpler way to prove this is to notice that if $\mathscr{B}$ is finite then $\eta$ is periodic. This gives immediately that $X_\eta$ is sofic and by the above discussion, also $\widetilde{X}_\eta$ is sofic. Moreover, $X_\mathscr{B}$ is a finite union of the following form:
 	$$
	X_\mathscr{B}=\bigcup_{b\in \mathscr{B}}\bigcup_{0\leq r_b \leq b-1}X_{(r_b : b\in\mathscr{B})},
	$$
	where $x\in X_{(r_b : b\in\mathscr{B})}$ iff $(\text{supp }x \bmod b)\cap (b\Z+r_b)=\emptyset$ for each $b\in\mathscr{B}$. Notice also that $X_{(r_b : b\in\mathscr{B})}$ is the hereditary closure of the subshift generated by the periodic point $x_{(r_b : b\in\mathscr{B})}$ whose support equals $\Z \setminus (\bigcup_{b\in\mathscr{B}}(b\Z+r_b))$. Thus, we can apply here the same argument as for $\widetilde{X}_\eta$. 
	 \end{Remark}

		\begin{Cor}
			Suppose that $\nu\in\cM^e(X,S)$ is atomic. Then $\kappa=\nu\ast\Bern$ has the Gibbs property.
		\end{Cor}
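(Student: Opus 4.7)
The plan is to reduce to the case where $X$ is a single periodic orbit, and then to verify the Gibbs inequality directly via Lemma~\ref{l:gi1}.

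First I would exploit ergodicity and atomicity to pin down $\nu$. If $\nu(\{x_0\})>0$, then $S$-invariance forces every point in the orbit of $x_0$ to carry the same mass, so the orbit is finite of some cardinality $p\geq 1$. By ergodicity $\nu$ is concentrated on this orbit, so $\nu=\frac{1}{p}\sum_{i=0}^{p-1}\delta_{S^i\xi}$, where $\xi$ is a periodic point of minimal period $p$. Since $\kappa=\nu\ast \Bern$ depends only on $\nu$, I may replace $X$ by the orbit of $\xi$ without loss of generality; then $\widetilde{X}$ is the hereditary closure of this finite orbit.

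Next I would compute $\widetilde{h}:=h(\widetilde{X},S)$. Set $\sigma := \no \xi[0,p-1]$. A routine periodicity argument (splitting any window of length $n$ into $\lfloor n/p\rfloor$ full periods plus a remainder of length at most $p$) gives $\no S^i\xi[0,n-1] \le n\sigma/p + p$ for every $i$ and $n$. Since $\cL_n(X) = \{S^i\xi[0,n-1] : 0\le i <p\}$ has at most $p$ elements and each element of $\cL_n(\widetilde{X})$ lies coordinatewise below at least one element of $\cL_n(X)$, one gets
$$2^{\no \xi[0,n-1]} \le |\cL_n(\widetilde{X})| \le p\cdot 2^{n\sigma/p+p},$$
whence $\widetilde{h}=\sigma/p$ after taking $\tfrac{1}{n}\log(\cdot)$ and letting $n\to\infty$.

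Finally I would verify~\eqref{gibbs} directly. For any $C\in\cL_n(\widetilde{X})$ with $\kappa(C)>0$, heredity yields some $C'\in\cL_n(X)$ with $C'\ge C$. Such a $C'$ has the form $S^i\xi[0,n-1]$ for at least one $0\le i<p$, so $\nu(C')\ge 1/p$; moreover $\no C' \le n\sigma/p + p$ by the bound above. Keeping just the $C'$-summand in Lemma~\ref{l:gi1} gives
$$\kappa(C)\ge \nu(C')\cdot 2^{-\no C'} \ge \tfrac{1}{p}\cdot 2^{-n\sigma/p-p} = a\cdot 2^{-n\widetilde{h}}, \qquad a:=\tfrac{1}{p\cdot 2^p}>0,$$
which is the desired Gibbs property. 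The only step that is not a bookkeeping exercise is the reduction to a periodic orbit at the very beginning; everything after that is elementary counting.
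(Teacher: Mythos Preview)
Your argument is correct. The reduction to a single periodic orbit, the computation $\widetilde{h}=\sigma/p$ via the two-sided block count, and the lower bound for $\kappa(C)$ obtained by keeping one term in~\eqref{e3} are all sound; in particular the estimate $\no C'\le n\sigma/p+p$ is exactly what is needed to absorb the error into the constant $a=1/(p\cdot 2^{p})$.

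The paper, however, argues differently. After the same reduction to a periodic orbit, it does not verify~\eqref{gibbs} by hand. Instead it cites \cite{Ku-Le-We} to identify $\kappa$ as the measure of maximal entropy of $(\widetilde{X},S)$, observes that the hereditary closure of a finite orbit is sofic, and then invokes Weiss's theorem that the measure of maximal entropy of a sofic subshift always has the Gibbs property. Your route is more elementary and self-contained: it needs only Lemma~\ref{l:gi1} and a counting argument, and never appeals to soficness or to the characterisation of $\kappa$ as the maximal-entropy measure. The paper's route is shorter but leans on two external results. Your computation is in fact a fleshed-out version of what the paper sketches separately in Remark~\ref{r:skon}; note that the remark writes $\kappa(B)\ge \tfrac{1}{k}2^{-nd}$, tacitly using $\no B\le nd$, whereas you correctly use $\no B\le nd+p$ and adjust the constant accordingly.
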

		\begin{proof}
			Since $\nu$ is atomic, it follows immediately that $\nu$ is concentrated on a finite orbit, i.e.\ there exists $x_0\in X$ and $k\geq 1$ with $S^kx_0=x_0$ and we have
		$$
			\nu=\frac{1}{k}(\delta_{x_0}+\delta_{Sx_0}+\dots +\delta_{S^{k-1}x_0}).
		$$
		I.e., we may assume that $X=\{x_0,Sx_0,\dots,S^{k-1}x_0\}$.
		It follows from Section~3.2.1 in~\cite{Ku-Le-We} that the measure of maximal entropy for $(\widetilde{X},S)$ is of the form
		$$
		\kappa=\nu\ast\Bern.
		$$
		Moreover, we have $d=\widetilde{h}$. Now, $(\widetilde{X},S)$ as the hereditary closure of finite (i.e.\ sofic) subshift is sofic. Therefore, its measure of maximal entropy has the Gibbs property.
		\end{proof}

{\begin{Remark}\label{r:skon}
As a matter of fact, if $x_0\in\{0,1\}^{\Z}$ is periodic of period $k\geq1$ and $X=\{S^jx_0\colon j=0,\ldots, k-1\}$ then $h(\widetilde{X},S)=d(X,S)=d$, $(\widetilde{X},S)$ is intrinsically ergodic with $\kappa=\nu\ast\Bern$ the measure of maximal entropy. If $n\geq k$ then we have precisely $k$ blocks (in $X$) of length $n$ of $\nu$-measure $1/k$. By the monotonicity~\eqref{increasing property of kappa}, we need to check~\eqref{gibbs} for maximal blocks and for such, by~\eqref{formula for maximal blocks}, we obtain
$$
\kappa(B)=\nu(B)2^{-\no B}\geq \frac1k 2^{-nd}=\frac1k 2^{-nh(\widetilde{X},S)},$$
so $\kappa$ satisfies the Gibbs property.
\end{Remark}}

\section{Subshifts of finite type (SFTs)}\label{sft}

In this section we give examples of subshifts with $d<\widetilde{h}$.

Given a family $\cF\subset\bigcup_{i=1}^\infty\{0,1\}^i$ of blocks, by $X_\cF$ we denote the set of all $x\in\{0,1\}^{\Z}$ such that no block from $\cF$ appears in $x$ (hence, $\cF\cap \cL(X_\cF)=\emptyset$).
A subshift $(X,S)$ is said to be of \textit{finite type} (or Markov) if $X=X_\cF$ for a certain {finite} family of blocks.
			\begin{Remark}\label{FTher} {
Note that if $\cF$ satisfies: $C\in \cF, C'\geq C$ $\Rightarrow$ $C'\in\cF$, then $(X_{\cF},S)$ is hereditary.}
			\end{Remark}
We will make use of some facts from the theory of SFTs given in~\cite{Li-Ma}.
				\begin{Example}
					Consider the {\em golden mean} subshift $X=X_{\{11\}}$. By Remark~\ref{FTher}, $X$ is hereditary. Moreover,
						$$h=\log\frac{1+\sqrt5}2,$$
						see e.g.\ Example 4.1.4 in \cite{Li-Ma} and $d=\nicefrac12$
					(consider $\ldots010101\ldots \in X$ and recall~\eqref{main inequalities}).
				\end{Example}
			However, not all SFTs are hereditary. Now, we will present a SFT that is not hereditary and we have:			
				\begin{equation}\label{SFT exemple non heriditary comparing d and h}
					h < d \text{ and } d=\widetilde{d}<\widetilde{h}
				\end{equation}
				\begin{Example}\label{e:prz2} Consider $\cF=\{00,111\}$ and $X = X_\cF$. We claim that \eqref{SFT exemple non heriditary comparing d and h} is valid. 
				
				We will show first that $h < d$. Note that $\cF':=\{000,001,100,111\}$ is the full list of forbidden blocks of length $3$ and $X_\cF=X_{\cF'}$. Now, the admissible blocks in $X_\cF$ of length $2$ are $11$, $10$ and $01$. Hence, the adjacent matrix $A$ for this subshift is given by
					
					\[
					A =
						\begin{bmatrix}
							 0 & 1 & 0\\
							 0 & 0 & 1\\
							 1 & 1 & 0\\
						 \end{bmatrix}
					\]
					
				\noindent and since $A^4$ has all entries positive, $A$ is aperiodic, that is, $X_{\cF'}$ is irreducible. It follows that $h=\log \la$, where $\la$ is the Perron-Frobenius eigenvalue of $A$. Since the characteristic polynomial equals $t^3-t-1$, we get $\la \approx 1.32$ and
					$$h \approx \log(1.32) \approx 0.4.$$
				Moroever,
					$d = 2/3$
				(consider $x = \ldots 011.011011 \ldots \in X_\cF$), which gives $h < d$.
	
				Now, we turn to the proof of $\widetilde{d}< \widetilde{h}$. The crucial observation is that
					\beq\label{STF subset STF}
						Y := X_{\{111,1001\}}\subset \widetilde{X}_{\cF}.
					\eeq
				
				\noindent Assume for a moment that \eqref{STF subset STF} is true. Then, we have $\widetilde{h} \ge h(Y, S)$, so in order to show $\widetilde{h} > \widetilde{d}$, it is enough to bound $h(Y, S)$ from below. We claim that
					\begin{equation}\label{entropy of Y}
						h(Y, S) \approx 0.76.
					\end{equation}
				In order to see \eqref{entropy of Y}, notice that $3$-admissible blocks in $X_{\{111,1001\}}$ are
					$$000, 100, 010, 001, 110, 101, 011.$$
				Hence, the adjacent matrix equals
					\[
						A =
						\begin{bmatrix}
							1 & 0 & 0 & 1 & 0 & 0 & 0\\
							1 & 0 & 0 & 0 & 0 & 0 & 0\\
							0 & 1 & 0 & 0 & 0 & 1 & 0\\
							0 & 0 & 1 & 0 & 0 & 0 & 1\\
							0 & 1 & 0 & 0 & 0 & 1 & 0\\
							0 & 0 & 1 & 0 & 0 & 0 & 1\\
							0 & 0 & 0 & 0 & 1 & 0 & 0\\
						\end{bmatrix}.
					\]
				Now $A^7>0$, so $A$ is aperiodic. It remains to calculate $\log \la $, where $\la$ is the Perron-Frobenius eigenvalue of $A$, which is approximately $0.76$.

				If remains to prove \eqref{STF subset STF}. For $y\in Y$, we need to find $x\in X$ with $y\leq x$ (coordinatewise). We begin by setting $x:=y$. Now, suppose that somewhere on $x$ we see block of the form
				\begin{equation}\label{form}
				B=1\underbrace{00\ldots0}_{\ell}1.
				\end{equation}
				By the definition of $Y$, either $\ell=1$ or $\ell\geq 3$. If $\ell=1$, we do nothing. If $\ell\geq 3$ and is even, we replace $B$ by $A=1\underbrace{01\ldots10}_{\ell}1$. If $\ell\geq 3$ and is odd, we replace $B$ by $A=1\underbrace{0110101010\ldots1010}_{\ell}1$. We apply this procedure to all occurences of blocks of the form~\eqref{form}. It is easy to see that as a result, we obtain a point $x$ with the desired properties.
			\end{Example}

\subsection*{Acknowledgements}
We would like to thank the referees for comments that helped to improve the presentation of our results. We would also like to thank Mariusz Lema\'{n}czyk for helpful discussions. We also thank Dominik Kwietniak for remarks on spacing shifts and Vitaly Bergelson for directing our attention to his joint paper with Tomasz Downarowicz~\cite{Bergelson_2008}. Research of J.\ Ku\l{}aga-Przymus is supported by the National Science Center, Poland, grant UMO-2019/33/B/ST1/00364. Research of M.D.~Lema\'{n}czyk is supported by the National Science Center, Poland, grant no.\ 2015/18/E/ST1/00214.

\small
\bibliographystyle{siam}
\providecommand{\noopsort}[1]{} 

\allowdisplaybreaks
\small

\bibliography{gibbs}

\bigskip
\footnotesize
\noindent
Joanna Ku\l aga-Przymus\\
\textsc{Faculty of Mathematics and Computer Science, Nicolaus Copernicus University, Chopina 12/18, 87-100 Toru\'{n}, Poland}\par\nopagebreak
\noindent
\href{mailto:joanna.kulaga@gmail.com}
{\texttt{joanna.kulaga@gmail.com}}

\medskip

\noindent
Micha\l{} D. Lema\'{n}czyk \\
\textsc{Faculty of Mathematics, Informatics and Mechanics, Warsaw University, Stefana Banacha 2, 02-097 Warsaw, Poland}\par\nopagebreak
\noindent
\href{mailto:m.lemanczyk@mimuw.edu.pl}
{\texttt{m.lemanczyk@mimuw.edu.pl}}

\end{document}